\theoremstyle{plain}
\newtheorem{thm}{Theorem}[section]
\newtheorem{prop}[thm]{Proposition}
\newtheorem{lem}[thm]{Lemma}
\newtheorem{cor}[thm]{Corollary}
\newtheorem{prob}[thm]{Problem}
\newtheorem{fact}[thm]{Fact}
\theoremstyle{definition}
\newtheorem{defn}[thm]{Definition}
\newtheorem{eg}[thm]{Example}
\newtheorem{set}[thm]{Setting}
\theoremstyle{remark}
\newtheorem{rem}[thm]{Remark}
\newtheorem{claim}[thm]{Claim}
\DeclareMathOperator{\mult}{mult}
\DeclareMathOperator{\Pic}{Pic}
\DeclareMathOperator{\Sing}{Sing}
\DeclareMathOperator{\Fix}{Fix}
\DeclareMathOperator{\Bs}{Bs}
\DeclareMathOperator{\Neg}{Neg}
\DeclareMathOperator{\Nef}{Nef}
\DeclareMathOperator{\gen}{gen}
\DeclareMathOperator{\sm}{sm}
\newcommand{\NEbar}{\mathop{\overline{\mathrm{NE}}}}
\begin{document}

\title
[Seshadri constants on rational surfaces ]
{Seshadri constants on rational surfaces with anticanonical pencils }
\subjclass[2010]{Primary 14C20, 14J26; Secondary 14D06}
\keywords{Seshadri constants, rational surfaces, log del Pezzo surfaces, T-singularities}

\author{Taro Sano}
\address{Mathematics Institute, 
Zeeman Building, 
University of Warwick, 
Coventry, CV4 7AL, UK}
\email{T.Sano@warwick.ac.uk}

\maketitle

\begin{abstract}
We study a Seshadri constant at a general point on a rational surface whose anticanonical linear system contains a pencil.
First, we describe a Seshadri constant of an ample line bundle 
 on such a rational surface explicitly by the numerical data of the ample line bundle. 
Secondly, we classify log del Pezzo surfaces which are special in terms of the Seshadri constants 
of the anticanonical divisors 
when the anticanonical degree is between 4 and 9. 
\end{abstract}

\tableofcontents

 \section{Introduction}

We consider projective varieties over an algebraically closed field $\mathbb{K}$  of characteristic zero 
throughout this note.

The following criterion for ampleness is called Seshadri's criterion (\cite{Laz} Theorem 1.4.13).

\begin{fact}\label{sescri}
Let $X$ be a projective variety and $L$ a line bundle on $X$. 
Then $L$ is ample if and only if there exists a positive number $\varepsilon > 0$ 
such that 
\[
\frac{L\cdot C}{\mult_x(C)} \geq \varepsilon  
\]
for every point $x \in X$ and every irreducible curve $C \subset X$ passing through $x$.
\end{fact}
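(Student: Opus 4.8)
The plan is to prove the two implications by quite different means: the forward direction ($L$ ample $\Rightarrow$ the displayed bound) is essentially formal, while the converse carries all of the content and will occupy the bulk of the argument. In both cases the guiding principle is to translate the ratio $\frac{L\cdot C}{\mult_x(C)}$ into intersection numbers on a blow-up, where positivity becomes a nefness statement and the multiplicity becomes an intersection with the exceptional divisor.

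For the direction assuming $L$ ample, I would choose $m$ so that $mL$ is very ample and use the resulting closed embedding $X \hookrightarrow \mathbb{P}^N$, under which $mL$ restricts to $\mathcal{O}_{\mathbb{P}^N}(1)$. Given any point $x$ and any irreducible curve $C \ni x$, a general hyperplane $H$ through $x$ meets $C$ with local intersection multiplicity $\mult_x(C)$ at $x$, so $\mult_x(C) \le \deg_{\mathbb{P}^N} C = m(L\cdot C)$. Hence the displayed inequality holds uniformly with $\epsilon = 1/m$. I expect this step to be routine.

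The converse is the heart of the matter. Assume the bound holds for some $\epsilon > 0$; applying it at a point of any curve gives $L\cdot C \ge \epsilon > 0$, so $L$ is strictly positive on curves. To deduce ampleness I would verify the Nakai--Moishezon criterion, namely $L^{\dim V}\cdot V > 0$ for every subvariety $V \subseteq X$, including $V = X$. Fix such a $V$ of dimension $d \ge 1$, pick a smooth point $x \in V$, and let $\mu\colon \tilde V \to V$ be its blow-up, with exceptional divisor $E \cong \mathbb{P}^{d-1}$ and $\mathcal{O}_{\tilde V}(E)|_E = \mathcal{O}(-1)$. Using $\mu^*(L|_V)\cdot \tilde C = L\cdot C$ and $E\cdot \tilde C = \mult_x(C)$ for strict transforms $\tilde C$ of curves $C \ni x$ in $V$, the hypothesis becomes $(\mu^*(L|_V) - \epsilon E)\cdot \tilde C \ge 0$. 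Setting $M := \mu^*(L|_V) - \epsilon E$ and testing it against the three kinds of curves on $\tilde V$ — strict transforms of curves through $x$ (handled by the hypothesis), strict transforms of curves missing $x$ (where $M\cdot \tilde C = L\cdot C > 0$), and curves contained in $E$ (where $M\cdot C = \epsilon\deg C > 0$) — shows $M$ is nef, hence $M^d \ge 0$. Expanding and using that $\mu^*(L|_V)|_E = \mathcal{O}_E$ is trivial together with $E^d = (-1)^{d-1}$ collapses the binomial sum to $M^d = L^d\cdot V - \epsilon^d$, so $L^d\cdot V \ge \epsilon^d > 0$ and Nakai--Moishezon applies.

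The main obstacle is exactly this converse: passing from a bound involving only curves and their multiplicities — information about one-dimensional positivity — to the full higher-dimensional positivity that ampleness requires. The blow-up identity $M^d = L^d\cdot V - \epsilon^d$ is the device that converts the easily verified nefness of $M$ into the strict inequality Nakai--Moishezon needs. A secondary point demanding care is that $\epsilon$ need not be rational, so $M$ is a genuine $\mathbb{R}$-divisor; the nefness and the inequality $M^d \ge 0$ must be justified in that setting, which is unproblematic since $\tilde V$ is projective and nef $\mathbb{R}$-classes there are limits of ample ones, forcing nonnegative top self-intersection.
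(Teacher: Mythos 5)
The paper does not prove this statement at all---it is quoted as a known fact from [Laz, Theorem 1.4.13]---so there is nothing internal to compare against; your argument is correct and is essentially the standard proof given in that reference (very ampleness plus hyperplane sections through $x$ for the forward direction, and Nakai--Moishezon via nefness of $\mu^{*}L-\epsilon E$ on a point blow-up, with the vanishing of the mixed terms $(\mu^{*}L)^{d-k}\cdot E^{k}$ and $E^{d}=(-1)^{d-1}$ collapsing the expansion, for the converse). All the delicate points---$E\cdot\tilde C=\mult_x(C)$, the trichotomy of curves on the blow-up, and the treatment of $M$ as a nef $\mathbb{R}$-class with $M^{d}\ge 0$ by Kleiman---are handled correctly.
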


Demailly \cite{Dem} defined the following number which is related to the above fact. 

\begin{defn}\label{sesdef}
Let $X$ be a projective variety, $L$ an ample line bundle on $X$ and 
 $x \in X$. 
We define the  {\em Seshadri constant} of $L$ at $x$ to be 
\[
\varepsilon(L,x) := \inf \frac{L\cdot C}{\mult_x(C)},  
\]
where the infimum is taken over all irreducible reduced curves $C$ in $X$ passing through $x$.
\end{defn}

\begin{rem}
It is well-known that $\varepsilon(L,x) = \max\{s \in \mathbb{R};\mu_x^{\ast}(L)-sE_x \ \text{is nef} \}$,  
 where $\mu_x: \widetilde{X}(x) \rightarrow X$ is the blow-up at $x$ and $E_x:= \mu_x^{-1}(x)$ is the exceptional divisor .
\end{rem} 

\begin{rem}\label{reducible}
Even if we take the infimum over all $1$-dimensional cycles $C$ on $X$ in Definition \ref{sesdef}, we 
get the same constant as $\varepsilon(L,x)$, that is, we do not need to assume that $C$ is irreducible and reduced in Definition \ref{sesdef}. 
This follows from the fact that $\mult_x(C_1) + \mult_x(C_2) = \mult_x(C_1+C_2)$ for curves $C_1,C_2$ 
on $X$ and that, for any positive numbers $d_1, d_2, m_1, m_2$, we have 
\[
\min \left\{\frac{d_1}{m_1}, \frac{d_1}{m_2} \right\} \le \frac{d_1+d_2}{m_1+m_2}.
\]
\end{rem}

\begin{rem}
We have $\varepsilon(kL,x) = k \varepsilon(L,x)$ for all $k \in \mathbb{Z}_{>0}$. 
Since $\varepsilon(L,x)$ depends only on the numerical class of $L$, we can define $\varepsilon(L,x)$ for a nef $\mathbb{R}$-divisor $L$.  
\end{rem}

\begin{rem}[Definition of $\varepsilon_{\gen}(L)$]
Let $\varepsilon_L$ be the Seshadri function for $L$ defined by 
\[
\varepsilon_L: X \rightarrow \mathbb{R} ; x \mapsto \varepsilon(L,x). 
\]
Then $\varepsilon_L$ takes a constant value at very general points $x$. We denote this value by $\varepsilon_{\text{gen}}(L)$ 
and call it the {\em Seshadri constant of $L$ at a very general point}.
\end{rem}

A Seshadri constant measures the local positivity of an ample line bundle on a projective variety. 
It appears in various situations. For example, it is related to generation of jets of adjoint bundles, 
Castelnuovo-Mumford regularity, and the existence of cscK metrics. We refer to \cite{primer}  for a survey.  

Unfortunately, a Seshadri constant is hard to compute, even for a surface. 
 Since the definition involves every point and every curve, for a general variety, we have the problem of calculating
 with the totality of all curves on $X$. 
Whether a curve contributes to the Seshadri constant depends on the individual curve, 
not just on its linear or algebraic or rational equivalence class.
There are many papers dealing with explicit computations of Seshadri constants, for example, \cite{B1,B2,BSch,BS,Br,G}. 

\vspace{5mm}

In this note, we give an explicit formula for a Seshadri constant on a rational surface $X$ with $\dim |{-}K_X| \ge 1$ as follows. 
(See Theorem \ref{main} for the precise statement.)

\begin{thm}\label{mainintro}
Let $X$ be a smooth rational surface such that $h^0(X,-K_X) \ge 2$.  
Set $r := 9- K_X^2$. 
Let $L$ be an ample line bundle on $X$. Then, for a general point $x \in X$, we have 
\[
 \varepsilon(L,x) =\begin{cases}
-K_X \cdot L & \text{ if } r=8, 9 \text{ and } {-}K_{\tilde{X}(x)} \cdot (\mu_x^*L- \sqrt{L^2} E_x) \le 0 \\
\min \{ A_{M_{L}}(L), -K_X \cdot L \} & \text{otherwise, } 
\end{cases}
\]
where  $\mu_x \colon \tilde{X}(x) \rightarrow X$ is a blow-up at $x$, $E_x := \mu_x^{-1}(x)$ and  $A_{M_L}(L)$ is the contribution of finitely many $-1$-curves on $\tilde{X}(x)$. We can calculate $A_{M_L}(L)$ explicitly if $r \le 9$. 
\end{thm}

This is a generalisation of the formula of Broustet \cite{Br} which describes the Seshadri constant of an anticanonical divisor of 
a del Pezzo surface. In our formula, we treat any polarisation on a rational surface $X$ such that $h^0(X,-K_X) \ge 2$. 
 For example, we can treat $X$ such that $r \le 8$ or a rational elliptic surface with a section. 
We can calculate the Seshadri constant at $x$ in $X$ from a finite collection of numerical data 
that is known a priori:  the Seshadri constant is achieved by a curve 
that is either the push-forward of a $-1$-curve of low degree on the
blow-up $\tilde{X}(x) \rightarrow X$, or by an anticanonical curve through $x$.

\vspace{5mm}

By using Theorem \ref{mainintro}, we study a Seshadri constant on a log del Pezzo surface and a relation with its singularities. 
Seshadri constants vary lower semi-continuously in a flat family (Proposition \ref{lsc}). 
Hence  a variety with a small Seshadri constant can be considered to be ``special''. 
Nakamaye studied a Seshadri constant on an abelian variety and 
characterised an abelian variety of product type via a Seshadri constant in \cite{N}.    
Inspired by this result, we study a Seshadri constant of an anticanonical divisor of a log del Pezzo surface and 
determine those with large Seshadri constants.  
The result is as follows. 
 
\begin{thm}\label{ldpclassifintro}
Let $X$ be a log del Pezzo surface. 
\begin{enumerate}

\item[(i)] Suppose that $K_X^2 =4,5,6,7,8$. 

Then $\varepsilon_{\gen}(-K_X) = 2$ 
if and only if 
 $X$ has only canonical singularities or $ X $ is one of the $7$ types of the surfaces 
$Z_i (i=1,\ldots,7)$ which are defined in Remark \ref{speldp}. That is, for $X$ with non-canonical singularities which is not 
$Z_i$, we have $\varepsilon_{\gen}(-K_X) <2$.   
\item[(ii)] Suppose that $K_X^2 = 9$. 

Then $\varepsilon_{\gen}(-K_X) = 3$ if and only if 
 $X \simeq \mathbb{P}^2$.  
That is, for $X$ which is not $\mathbb{P}^2$, we have $\varepsilon_{\gen}(-K_X)<3$. 
\end{enumerate}
\end{thm}
  
The above $Z_i$ has a cyclic quotient singularity of type $1/4(1,1)$ which is not canonical and several $A_1$-singularities. 
However a $1/4(1,1)$-singularity is mild and special among general quotient singularities.   
Hence Theorem \ref{ldpclassifintro} means that ``special'' log del Pezzo surfaces $X$ are those with 
non-canonical singularities which are not $Z_i$ when $4 \le K_X^2 \le 8$.

\subsection*{The contents of this note}
We summarize the contents of this note. 

In Section \ref{state},  Theorem \ref{main} states an explicit formula for
a Seshadri constant $\varepsilon_{\gen}(L)$ of an ample line bundle $L$ on a rational surface $X$ 
with a birational morphism $X \rightarrow \mathbb{P}^2$ such that $h^0(X, -K_X) \ge 2$.

In Section \ref{prfmain}, we prove Theorem \ref{main}. 
In \ref{reduction1}, we prove a lemma about negative curves on an anticanonical rational surface and 
restrict curves which are necessary for computing $\varepsilon(L,x)$. 
Note that we pay attention to curves on $\tilde{X}(x)$ rather than curves on $X$. 
In Lemma \ref{bound} of the subsection \ref{boundsub}, we establish finiteness of degrees of curves $\tilde{C}$ on $\tilde{X}(x)$  
which are necessary for the calculation. Note that there are infinitely many $-1$-curves on $\tilde{X}(x)$ in general if $r \ge 8$. 
It turns out that we can describe the bound explicitly when $r \le 9$.   
In \ref{finalred}, we show that, in order to compute $\varepsilon(L,x)$ for a general point $x$, 
it is enough to consider a curve $\tilde{C} \subset \tilde{X}(x)$ which is either  
a $-1$-curve or an  anticanonical curve. 
In \ref{hirzsection}, we compute $\varepsilon_{\gen}(L)$ on $X$ with a birational morphism to a Hirzebruch surface $\mathbb{F}_n$. 
In \ref{exsection}, we give an example of an explicit computation of $\varepsilon_{\gen}(L)$ on a del Pezzo surface $X$. 

In Section \ref{ldpstate}, we state the result  
on $\varepsilon_{\gen}(-K_X)$ of a log del Pezzo surface $X$ in Theorem \ref{ldpclassif}. 

In Section \ref{ldp}, we prove Theorem \ref{ldpclassif}.  
We calculate $\varepsilon_{\gen}(-K_X)$ by calculating $\varepsilon_{\gen}(\nu^{\ast}(-K_X))$ 
on the minimal resolution $\nu: Y \rightarrow X$.
In almost all cases, we can show that $\varepsilon_{\gen}(-K_X) < 2$ by using the strict transform $\tilde{l} \subset Y$ of a certain line $l$ on 
$\mathbb{P}^2$.

\section{Notation}\label{notation}
Let $X$ be a smooth projective surface. 
Put $N^1(X) := \left( \Pic (X) \otimes_{\mathbb{Z}} \mathbb{R}\right) / {\equiv}$, where $\equiv$ means numerical equivalence.
Let $\NEbar (X) \subset N^1(X)$ be the closure of the convex cone generated by classes of effective curves 
 and $\Nef (X) \subset N^1(X)$ be the convex cone generated by classes of nef divisors. 
For $D \in \Pic X$, let $[D] \in N^1(X)$ be its numerical class.  
 A divisor class $D \in \Pic(X)$ is called a $-m$-{\it class} if $D^2 = -m,K_X \cdot D = m-2$,  
where $K_X$ is the canonical divisor of $X$. 
A $-m$-class $D$ is called a $-m$-{\it cycle} (resp. a $-m$-{\it curve}) if it is represented by an effective divisor
(resp. an irreducible divisor). 

For 
$(\alpha; \stackrel{k_1}{\overbrace{\beta_1, \ldots, \beta_1}}, \ldots, 
\stackrel{k_l}{\overbrace{\beta_l, \ldots, \beta_l}}) \in \mathbb{Z}^{1+N}$, where $N = \sum_{i=1}^l k_i$, 
we write $(\alpha; \beta_1^{k_1}, \ldots, \beta_l^{k_l})$ for short.

\section{An explicit formula on a blow-up of $\mathbb{P}^2$}\label{state}

In this section, we state a formula for the Seshadri constant of 
 an ample line bundle on a rational surface $X$ with a birational morphism $X \rightarrow \mathbb{P}^2$ such that $\dim |{-}K_X| \ge 1$.
We first fix the setting.

\begin{set}\label{setting}  
Let $X$ be a smooth rational surface such that $\dim |{-} K_X | \ge 1$ with a birational morphism $\mu \colon X \rightarrow \mathbb{P}^2$ which is a composition of blow-ups  
 \begin{equation}
 \mu \colon X=X_{r+1} \stackrel{\mu_r}{\rightarrow} X_r \rightarrow 
\cdots \stackrel{\mu_1}{\rightarrow} X_1 
  = \mathbb{P}^2, 
  \end{equation} 
  where $\mu_i \colon X_{i+1} \rightarrow X_i$
is the blow-up at $x_i \in X_i$. 
Let
 \begin{equation}\label{ampleform}
 L:= \mu^{\ast} \mathcal{O}_{\mathbb{P}^2}(a) - \sum_{i=1}^r b_i E_i 
 \end{equation}
 be an ample Cartier divisor on $X$, where $a,b_1,\ldots, b_r$ are integers and we set 
 \[
E_i:= (\mu_{i+1} \circ \cdots \circ \mu_r)^* \mu_i^{-1}(x_i)
\]
 for $i=1,\ldots,r$ which is a $-1$-cycle. 
 We assume that $L$ is primitive, that is $\gcd (a, b_1, \ldots, b_r) = 1$. 
 For $s \in \mathbb{Z}_{>0}$, set   
\[
\Phi_s := 
 \left\{(d;m_1,\ldots,m_s) \in \mathbb{Z}^{s+1}  \mid  
d^2 - \sum_{i=1}^s m_i^2 = -1, 3 d - \sum_{i=1}^s m_i = 1,
m_s \ge 1  \right\}. 
\]
We call an element of $\Phi_s$ a {\it $-1$-class}. 
\end{set}

\begin{rem}
 The condition $\dim |{-}K_X| \ge 1$ is satisfied if $r \le 8$ or if $X$ is a rational elliptic surface with a section, 
 for example. Note that the centers $x_i \in X_i$ can be infinitely near points. 
Also note that $|{-}K_{\widetilde{X}(x)}| \neq \emptyset$, where $\mu_x : \tilde{X}(x)\rightarrow X$ is a blow-up at $x$. 
\end{rem} 

Here is the explicit formula for the Seshadri constant $\varepsilon_{\gen}(L)$ of $L$ at a general point. 
The Seshadri constant is computed by an anticanonical curve on $X$ or a curve on $X$  whose strict transform on $\tilde{X}(x)$ is a $-1$-curve. 

\begin{thm}\label{main}
Let $X$ be a smooth rational surface and $L$ an ample line bundle on $X$ as in Setting \ref{setting}. Set \[
b_{r+1}:= \sqrt{\mathstrut L^2} = \sqrt{\mathstrut a^2 - \sum_{i=1}^r b_i^2 }. 
\]  
Then we have the following. 
\begin{enumerate}
\item[(i)] There exists a positive number $M_L$ which is determined by $L$ such that  
\[
\varepsilon_{\gen}(L) =\begin{cases}
-K_X \cdot L & \text{ if } r=8, \sum_{i=1}^9 \frac{b_i}{a} =3 \text{ or } r=9, \sum_{i=1}^{10} \frac{b_i}{a} \ge 3 \\
\min \{ A_{M_{L}}(L), -K_X \cdot L \} & \text{otherwise, } 
\end{cases}
\]
where 
\[
A_{M_L}(L):= \min_{(d;m_1,\ldots,m_{r+1}) \in \Phi_{r+1}, d \le M_L} 
\frac{d a - \sum_{i=1}^r m_i b_i}{m_{r+1}}.  
\]
\item[(ii)] If $r=8,9$ and  $\sum_{i=1}^{r+1} \frac{b_i}{a} < 3$, then $M_L$ is given by 
\[
M_L := \frac{2- c + \sqrt{c^2 +20c +4}}{2c},  
\] 
where we set $c:= 3- \sum_{i=1}^{r+1} \frac{b_i}{a}$.
\item[(iii)] If $\sum_{i=1}^{r+1} \frac{b_i}{a} =3$, then $M_L$ is given by 
\[
M_L := \frac{a^2+1}{2a} .
\]
\item[(iv)] There exists a nonempty Zariski open subset $U_L \subset X$ such that $\varepsilon_{\gen}(L) = \varepsilon(L,x)$ for all $x \in U_L$. 
\end{enumerate}
\end{thm}

\begin{rem}
We explain the description of $M_L$ in the cases which are not covered in (ii) and (iii) of Theorem \ref{main}. 

If $r \le 7$, we set $M_L =6$ since the maximum value of $d$ in $\Phi_{8}$ is $6$. In fact,  
$\Phi_{r+1}$ is a finite set if $r \le 7$. 

We do not give an explicit description of $M_L$ if $r \ge 10$. 
We can describe $M_L$ by a calculation which is more detailed than that in Lemma \ref{bound}. 
However, we omit this for simplicity. 
\end{rem}

\begin{rem} Suppose that $M_L$ is given explicitly as in the case $r=8$. 
 We can list up all the $-1$-classes in the finite set 
$\Phi_{r+1}(M_L):=\{(d;m_1, \ldots, m_{r+1}) \in \Phi_{r+1} \mid d \le M_L \}$ since $0 \le m_i < d$ for each $i$. 
So we can compute $\varepsilon_{\gen}(L)$ from finitely many data. We go over a computational example explicitly 
in Example \ref{compex}. 
\end{rem}

\begin{rem}
In Section \ref{hirzsection}, we explain how to compute $\varepsilon_{\gen}(L)$ on a rational surface $X$ such that $h^0(X,-K_X) \ge 2$ 
with a birational morphism $X \rightarrow \mathbb{F}_n$. 
\end{rem}

\section{Proof of Theorem \ref{main}} \label{prfmain}

\subsection{First reduction}\label{reduction1}

First, the following fact about the effective cone of an anticanonical rational surface 
is fundamental for us. 

\begin{prop}\label{reveal}  
Let $S$ be an smooth rational surface. Assume that $S$ is {\it anticanonical}, 
that is,  $|{-}K_S| \neq \emptyset$. Then we have the following.   
\item[(i)] We have 
\[
\NEbar (S) = \mathbb{R}_{\ge 0} [-K_S] + \sum_{\substack{
C \subset S \text{ an irreducible curve}, \\
C^2 < 0}} \mathbb{R}_{\ge 0} [C].
\]
\item[(ii)] Let $C \subset S$ be an irreducible reduced curve such that $C^2<0$. 
Then $C$ is either a $-1$-curve, a $-2$-curve or a fixed component of $|{-}K_S|$. 
\item[(iii)] Let $D$ be a Cartier divisor on $S$. Then $D$ is nef if and only if $D \cdot C \ge 0$ for all $C$ such that 
$C$ is either a $-1$-curve, a $-2$-curve or a fixed component 
of $|{-}K_S|$. 
\end{prop}

\begin{proof}
\noindent (i) See \cite[Lemma 4.1]{LH}. 

\noindent (ii) Suppose that $C$ is not a fixed component of $|{-}K_S|$. 
Then we have $-K_S \cdot C \ge 0$ 
and $2p_a(C) -2 = \left( K_S + C \right) \cdot C \le C^2 < 0$, where $p_a(C)$ is the arithmetic genus of $C$. 
Therefore we get $C \simeq \mathbb{P}^1$ and $C^2 = -1$ or $-2$.   

\noindent (iii) This follows from (i) and (ii). 
\end{proof}

We want to compute $\varepsilon(L,x)$ on a rational surface $X$  such that $\dim |{-}K_X| \ge 1$ at $x \in X$. 
We use the notations in Section \ref{state}. 
Recall that $\varepsilon(L,x) = \max \{s \in \mathbb{R} \mid \mu_x^{\ast}L - sE_x \text{ is nef}  \}$, where 
$\mu_x \colon \tilde{X}(x) \rightarrow X$ is the blow-up at $x$ and $E_x:= \mu_x^{-1}(x)$.  
In order to check the nefness of an $\mathbb{R}$-divisor $\mu_x^* L- s E_x$, it is enough to check   
\begin{equation}\label{reduce1}
(\mu_x^* L - s E_x) \cdot \tilde{C} \ge 0,  
\end{equation}
where $\tilde{C} \subset \tilde{X}(x)$ is either a $-1$-curve, a $-2$-curves, a fixed component of $|{-}K_{\tilde{X}(x)}|$ 
or an anticanonical curve on $\tilde{X}(x)$ since $|{-}K_{\tilde{X}(x)}| \neq \emptyset$ and 
we can use Lemma \ref{reveal}(iii) for $S = \tilde{X}(x)$.  
The problem is that there are infinitely many $-1$-curves and $-2$-curves on $\tilde{X}(x)$ in general. 
In the following, we restrict curves more precisely when $X$ has a birational morpshism to $\mathbb{P}^2$. 

Let $X, L$ be as in Setting \ref{setting}. 
For $(k;l):= (k; l_1,\ldots,l_{r+1}) \in \mathbb{Z}^{r+2}$, set  
\[
\mathcal{O}_{\tilde{X}(x)}(k; l) 
:= \mu_x^{\ast}\left( \mu^{\ast} \mathcal{O}_{\mathbb{P}^2}(k) - \sum_{i=1}^r l_i E_i \right) - l_{r+1} E_x. 
\]
Note that we have $\varepsilon(L,x) \le \sqrt{L^2}$ since $(\mu_x^*L- \epsilon(L,x) E_x)^2 \ge 0$. 
Hence it is enough to consider curves $\tilde{C} \subset \tilde{X}(x)$ such that 
\begin{equation}\label{necineq}
(\mu_x^* L - \sqrt{L^2}E_x) \cdot \tilde{C} \le 0. 
\end{equation}
For such a curve $\tilde{C} \subset \tilde{X}(x)$, let $d, m_1, \ldots, m_{r+1}$ be integers such that 
 \[
 \mathcal{O}_{\tilde{X}(x)}(\tilde{C}) = \mathcal{O}_{\tilde{X}(x)}(d; m_1, \ldots, m_{r+1}).
 \] 
  Then 
the inequality (\ref{necineq}) is equivalent to 
\begin{equation}\label{necineq2}
d a - \sum_{i=1}^{r+1} m_i b_i \le 0. 
\end{equation}

\begin{rem}\label{=3rem} 
We give a remark about the rationality of $\varepsilon(L,x)$. 

When $\sum_{i=1}^{r+1} \frac{b_i}{a} \neq 3$, in Proposition \ref{bound}, we show  
that there are only finitely many $-1$-classes and $-2$-classes $(d;m_1, \ldots, m_{r+1})$ which satisfy (\ref{necineq2}). 
Next, in (\ref{redeq}) in Section \ref{finalred}, we take the infimum as in Definition \ref{sesdef} among these finitely many curves 
 to compute $\varepsilon(L,x)$. Hence $\varepsilon(L,x)$ is actually attained by a specific curve and a rational number.    

When $\sum_{i=1}^{r+1} \frac{b_i}{a} =3$, 
there might be infinitely many classes $(d;m)$ which satisfy (\ref{necineq2}) if we include an equality in the inequality (\ref{necineq2}). 
In order to avoid this, we consider an anticanonical curve. 
For a general point $x \in X$, there exists a curve 
 $C \in |{-}K_X|$ such that $\mult_x(C) =1$ and $C$ satisfies that  
\[
\frac{L \cdot C}{\mult_x C} = \sqrt{L^2}.
\] 
Hence it is enough to consider the classes $(d;m)$ such that 
\begin{equation}\label{necineq3}
d a - \sum_{i=1}^{r+1} m_i b_i < 0  
\end{equation}
and the above anticanonical curve when we take the infimum in (\ref{redeq}) in Section \ref{finalred}. 
In Proposition \ref{bound}, we show that there are only finitely many  
$-1$-classes and $-2$-classes $(d;m_1, \ldots, m_{r+1})$ which satisfy (\ref{necineq3}), 
thus $\varepsilon(L,x)$ turns out to be a rational number.   
\end{rem}

\subsection{Bound of degrees of necessary curve classes}\label{boundsub}
In general, there are infinitely many $-1$-curves and $-2$-curves on $\tilde{X}(x)$. 
However, the following lemma shows that there are only finitely many of those which satisfy the inequality (\ref{necineq}) or (\ref{necineq2}).

\begin{lem}\label{bound} 
Let $X,L$ be as in Setting \ref{setting}. 
\begin{enumerate} 

\item[(i)] Assume that $\sum_{i=1}^{r+1} \frac{b_i}{a} \neq 3$. Then there exists a positive number $M_L$ (resp. $M'_L$) which is determined by $L$ such that,  
 if 
 $(d; m_1,\ldots, m_{r+1})$ is a $-1$-class (resp. $-2$-class) with the inequality $(\ref{necineq2})$, then $d \le M_L$ (resp. $d \le M'_L$). 
\item[(ii)] Assume that $r = 8,9$ and $\sum_{i=1}^{r+1} \frac{b_i}{a} < 3$. 
Set $c:= 3- \sum_{i=1}^{r+1} \frac{b_i}{a}$.  
Then the above $M_L$ can be given by  
\[
M_L := \frac{2- c + \sqrt{c^2 +20c +4}}{2c}.  
\] 
 
\item[(iii)] Assume that $\sum_{i=1}^{r+1} \frac{b_i}{a} = 3$.
Then there exists a positive number $M_L$ (resp. $M'_L$) which is determined by $L$ such that,  
 if 
 $(d; m_1,\ldots, m_{r+1})$ is a $-1$-class (resp. $-2$-class) with the inequality $(\ref{necineq3})$, then $d \le M_L$ (resp. $d \le M'_L$). 
 
 Moreover, $M_L$ can be given by 
 \[
 M_L:= \frac{a^2+1}{2a}. 
 \]
\end{enumerate}
\end{lem}

\begin{proof}
If $r \le 7$, this lemma is trivial since there are only finitely many 
 negative curves on $\widetilde{X}(x)$. 
So we assume $r \ge 8$.

\noindent  
We introduce the notation used in Figure \ref{figcone8}.

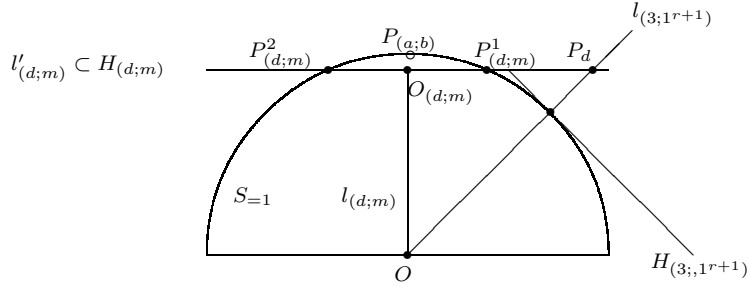
\begin{figure}[t]
\begin{picture}(200,150)(0,70)
   \put(44, 150){\line(1, 0){152}}
\put(120, 80){\line(0, 1){70}}
\put(44, 80){\line(1, 0){152}}
\put(228, 80){\line(-1, 1){70}}
\put(120, 80){\line(1, 1){85}}
    \qbezier(196,80)(196,111.48)(173.74,133.74)
  \qbezier(173.74,133.74)(151.48,156)(120,156)
  \qbezier(120,156)(88.52,156)(66.26,133.74)
  \qbezier(66.26,133.74)(44,111.48)(44,80)
  \put( 212, 75){\tiny $H_{(3;,1^{ r+1})}$}
\put( 205, 170){\tiny $l_{ (3;1^{r+1})}$}
\put( -30, 150){\tiny$l'_{(d;m)} \subset H_{(d;m)}$}
\put(121,155.5){\circle{3}}
\put(120,150){\circle*{3}}
\put(120,80){\circle*{3}}
\put(150,150){\circle*{3}}
\put(190,150){\circle*{3}}
\put( 90, 150){\circle*{3}}
\put(174,134){\circle*{3}}
\put(115,70){\tiny$O$}
\put(120,140){\tiny$O_{(d;m)}$}

\put( 110, 160){\tiny $P_{(a;b)}$}
\put( 180, 155){\tiny $P_{d}$}
\put( 145, 155){\tiny $P^1_{(d;m)}$}
\put( 60, 155){\tiny $P^2_{(d;m)}$}
\put( 50, 100){ \tiny $S_{=1}$}
\put( 95, 100){\tiny $l_{(d;m)}$}
  \end{picture}
\caption{The case $r=8$. Note that $H_{(3;1^{r+1})} \cap S_{=1} = \{(1/3, \ldots, 1/3) \}$  }\label{figcone8}
\end{figure}

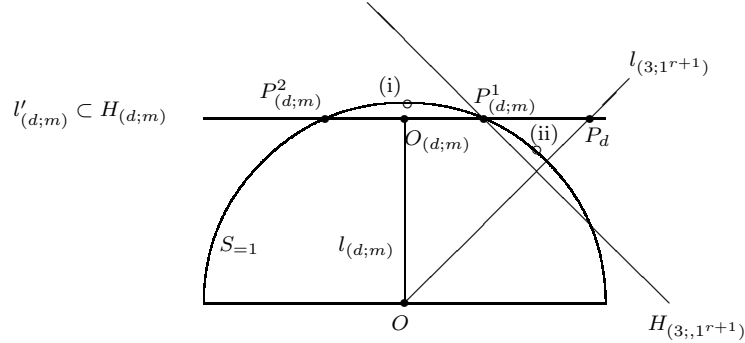
\begin{figure}[t]
\begin{picture}(200,150)(0,70)
   \put(44, 150){\line(1, 0){152}}
\put(120, 80){\line(0, 1){70}}
\put(44, 80){\line(1, 0){152}}
\put(220, 80){\line(-1, 1){114}}
\put(120, 80){\line(1, 1){85}}
    \qbezier(196,80)(196,111.48)(173.74,133.74)
  \qbezier(173.74,133.74)(151.48,156)(120,156)
  \qbezier(120,156)(88.52,156)(66.26,133.74)
  \qbezier(66.26,133.74)(44,111.48)(44,80)
  \put( 212, 70){\tiny$H_{(3;,1^{ r+1})}$}
\put( 205, 170){\tiny $l_{ (3;1^{r+1})}$}
\put( -28, 150.8){\tiny $l'_{(d;m)} \subset H_{(d;m)}$}
\put(121,155.5){\circle{3}}
\put(170,138){\circle{3}}
\put(120,150){\circle*{3}}
\put(120,80){\circle*{3}}
\put(150,150){\circle*{3}}
\put(190,150){\circle*{3}}
\put( 90, 150){\circle*{3}}
\put(115,70){\tiny $O$}
\put( 110, 160){\tiny (i)}
\put(167,142){\tiny (ii)}
\put( 188, 141){\tiny $P_{d}$}
\put( 146, 156){\tiny $P^1_{(d;m)}$}
\put( 65, 158){\tiny $P^2_{(d;m)}$}
\put( 50, 100){\tiny $S_{=1}$}
\put( 95, 100){\tiny $l_{(d;m)}$}
\put(120,140){\tiny $O_{(d;m)}$}
  \end{picture}
\caption{The case $r=9$. White circles (i), (ii) are $P_{(a;b)}$ for the cases $\sum_{i=1}^{r+1} \frac{b_i}{a} < 3$, 
 $\sum_{i=1}^{r+1} \frac{b_i}{a} > 3$, respectively. 
Note that $P^1_{(d;m)} \in H_{(3;1^{r+1})}$  by Claim \ref{lying}.  }\label{figcone9}
\end{figure}

 For  $(d;m)= 
(d;m_1,\ldots,m_{r+1}) \in \mathbb{Z}^{r+2}$, put
\begin{align*}
S_{=1}&:= \{(y_1,\ldots,y_{r+1})\in \mathbb{R}^{r+1} \mid y_1^2+\cdots+y_{r+1}^2=1  \}, \\
N_{(d;m)}&:=\{(y_1,\ldots,y_{r+1})\in \mathbb{R}^{r+1} \mid
m_1 y_1 +\cdots+ m_{r+1} y_{r+1} \ge d \},\\ 
H_{(d;m)}&:=\{(y_1,\ldots,y_{r+1})\in \mathbb{R}^{r+1} \mid
m_1 y_1 +\cdots+ m_{r+1} y_{r+1} = d \}, \\
\Neg_{(d;m)} &:= N_{(d;m)} \cap S_{=1}.   
\end{align*}
For a $-1$-class $(d; m_1, \ldots, m_{r+1})$, we set 
\[
P_{d} := \left( \frac{d}{3d-1},\ldots,\frac{d}{3d-1} \right) = H_{(d;m)} \cap (\mathbb{R}\cdot (1,\ldots,1)).
\] 

Let $l_{(d;m)} \subset \mathbb{R}^{r+1}$ be the line such that 
 $O :=(0,\ldots,0) \in l_{(d;m)}$ 
and $l_{(d;m)} \perp H_{(d;m)}$.  
Set \[
O_{(d;m)}:= \frac{d}{d^2+1}(m_1,\ldots,m_{r+1})= 
 l_{(d;m)} \cap H_{(d;m)}.
 \]
 Let $l'_{(d;m)}$ be the line 
through $P_{d}$ and $O_{(d;m)}$. 

Let $P^1_{(d;m)}, P^2_{(d;m)} \in \mathbb{R}^{r+1}$ be the points such that  
 $l'_{(d;m)} \cap S_{=1} = \{P^1_{(d;m)}, P^2_{(d;m)} \} $. 
 If $r=8,9$, we distinguish these two points by 
 $\delta (P^1_{(d;m)}, H_{(3;1^{r+1})}) < \delta (P^2_{(d;m)}, H_{(3;1^{r+1})})$
 as in Figure \ref{figcone8}, where, for a point $P=(y_1, \ldots, y_{r+1}) \in \mathbb{R}^{r+1}$, we set 
\[
\delta (P, H_{(3;1^{r+1})}):= \left| 3- \sum_{i=1}^{r+1} y_i \right| \ge 0. 
\] 
Note that, for $P \in \mathbb{R}^{r+1}$, we have $P \in H_{(3;1^{r+1})}$ if and only if $\delta (P, H_{(3;1^{r+1})})=0$.

Let $(d; m_1, \ldots, m_{r+1})$ be a $-1$-class with the inequality (\ref{necineq2}). 
Let 
\[
P(a;b):= \left( \frac{b_1}{a},\ldots,\frac{b_{r+1}}{a} \right) \in S_{=1}
\] be the point which corresponds to $L$. 
Since $d a - \sum_{i=1}^{r+1} m_i b_i \le 0$, we have $P(a;b) \in \Neg_{(d; m)}$.

\vspace{5mm}

\noindent(i) 
We first consider the case where $\sum_{i=1}^{r+1} \frac{b_i}{a} \neq 3$.
In this case, we have $P(a;b ) \notin  H_{(3;1^{r+1})}$ and $\delta(P(a;b), H_{(3;1^{r+1}}) >0$. 
Set 
\[
\delta \left( \Neg_{(d; m)}, H_{(3;1^{r+1})}\right):= \sup \left\{ \delta(x, H_{(3;1^{r+1})}) \mid x \in \Neg_{(d;m)}\right\}.  
\]
We show that there are only finitely many $-1$-classes $(d;m)$ such that $P_{(a;b)} \in \Neg_{(d;m)}$ 
by showing that $\delta \left( \Neg_{(d; m)}, H_{(3;1^{r+1})}\right)$ converges to $0$  as $d \rightarrow \infty$.

For a $-1$-class $(d;m)$, we have  
\[
\delta(O_{(d;m)}, H_{(3;1^{r+1})}) = \left| 3- \frac{d}{d^2+1} (3d -1)\right| =  \frac{d+3}{d^2+1}
\]
and this converges to $0$ as $d \rightarrow \infty$. 
The length of the line segment $O O_{(d;m)}$ with respect to the Euclidean metric is 
\[
 \frac{d}{d^2+1} \sqrt{d^2+1} =  \frac{d}{\sqrt{d^2+1}}. 
\]
Hence the lengths of the line segments $O_{(d;m)} P^1_{(d;m)}$ and $O_{(d;m)} P^2_{(d;m)}$ are  
\[
 \frac{1}{\sqrt{d^2+1}} 
\]
and this converges to $0$ as $d \rightarrow \infty$. 
Thus we see that $P^1_{(d;m)}$ and $P^2_{(d;m)}$ converge to the hyperplane $H_{(3;1^{r+1})}$.  
For $i=1,2$, let $H^i_{(d;m)}$ be a hyperplane which is parallel to $H_{(3;1^{r+1})}$ such that $P^i_{(d;m)} \in H^i_{(d;m)}$. 
We can see that $\Neg_{(d;m)}$ lies between $H^1_{(d;m)}$ and $H^2_{(d;m)}$ by elementary calculations. 
Hence we see that  
$\Neg_{(d; m)}$ approaches to the hyperplane $H_{(3;1^{r+1})}$ as $d \rightarrow \infty$ and there are only 
finitely many $d$ such that $P(a;b) \in \Neg_{(d; m)}$. 
Thus we get the claim for $-1$-classes. 

We get the claim for $-2$-classes if $\sum_{i=1}^{r+1} \frac{b_i}{a} \neq 3$, similarly. 

\vspace{5mm}

\noindent(ii) 
Assume that $r=8,9$ and  $\sum_{i=1}^{r+1} \frac{b_i}{a} < 3$. 
For a $-1$-class $(d; m)$ with the inequality (\ref{necineq}), 
we have  
 \[
\delta(P^2_{(d;m)}, H_{(3;1^{r+1})}) \ge \delta (P(a;b),H_{(3;1^{r+1})}) = 3 -  \sum_{i=1}^{r+1} \frac{b_i}{a}.    
\]
We use the following claim. 

\begin{claim}\label{lying}
If $r=9$, we have $P^1_{(d;m)} \in H_{(3;1^{r+1})}$. 
\end{claim} 
\begin{proof}
Set $Q_{(d;m)}:= l'_{(d;m)} \cap H_{(3;1^{r+1})}$.  We show $Q_{(d;m)} = P^1_{(d;m)}$ in the following. 
We consider the length of line segments with respect to an Euclidean metric. 
We see that the length of the line segment $O O_{(d;m)}$ is $d / \sqrt{d^2+1}$ and that of $O_{(d;m)} P^1_{(d;m)}$ is $1/ \sqrt{d^2+1}$ 
as in (i). Hence it is enough to show that the length of the line segment 
$O_{(d;m)} Q_{(d;m)}$ is $1/ \sqrt{d^2+1}$. We can check this by elementary calculations.   
\end{proof}

We can obtain an inequality 
\begin{equation}\label{ineqii}
\delta(P^2_{(d;m)}, H_{(3;1^{r+1})}) \le 2 \cdot \delta(O_{(d;m)}, H_{(3;1^{r+1})})     
\end{equation}
as follows. 
If $r=9$, Claim \ref{lying} actually implies the equality $\delta(P^2_{(d;m)}, H_{(3;1^{r+1})}) = 2 \cdot \delta(O_{(d;m)}, H_{(3;1^{r+1})})$. 
 If $r=8$, we see that $P^1_{(d;m)}$ and $P^2_{(d;m)}$ are on the same side with respect to $H_{(3;1^{r+1})}$ 
  since $\{ (1/3, \ldots ,1/3)\} = H_{(3;1^9)} \cap S_{=1}$. We get the inequality (\ref{ineqii}) by this. 
   
We see that $2 \cdot \delta(O_{(d;m)}, H_{(3;1^{r+1})}) =   2(d+3)/(d^2+1)$ by an easy calculation.  
By (\ref{ineqii}) and this equality,  we have 
\[
c:= 3 -  \sum_{i=1}^{r+1} \frac{b_i}{a} \le \frac{2(d+3)}{d^2+1}.  
\]
 Hence we can take $M_L$ as the solution of the equation 
\[
c = \frac{2(x+3)}{x^2+1}, x>0.  
\]

\vspace{5mm}

\noindent(iii)
We next consider the case $\sum_{i=1}^{r+1} \frac{b_i}{a} = 3$. 

Let $(d;m_1, \ldots, m_{r+1})$ be a $-1$-class with the inequality (\ref{necineq3}). 
We see that  $P(a;b) \notin H_{(d;m)}$ and 
 $P(a;b)$ lies between $H_{(d;m)}$ and $H_{(\sqrt{d^2+1};m)}$ since we see that  
\[
H_{(\sqrt{d^2+1};m)} \cap S_{=1} = \left\{ \frac{1}{\sqrt{d^2+1}} (m_1,\ldots,m_{r+1}) \right\} 
\] 
by an elementary computation. 
Thus we have 
\[
0 < \delta (P(a;b),H_{(d;m)}) \le 
\delta (H_{(d;m)},H_{(\sqrt{d^2+1};m)}) = 
 \sqrt{d^2+1} - d =  \frac{1}{\sqrt{d^2+1}+d}.
 \] 
Moreover we have  
\[
\delta(P(a;b),H_{(d;m)}) = 
\frac{| \sum_{i=1}^{r+1} m_i b_i - d a |}{a} \ge \frac{1}{a} 
\] 
since $ \sum_{i=1}^{r+1} m_i b_i - d a  \in \mathbb{Z}$ for all $i$. 
Indeed, $b_{r+1} = 3 a - \sum_{i=1}^r b_i$ is an integer, and so $a, d, b_i, m_i \in \mathbb{Z}$ for $i =1,\ldots, r+1$. 
Therefore $\sqrt{d^2+1}+d \le a$. 
Hence we can take $M_L$ as the solution of the equation 
\[
\sqrt{x^2+1}+x = a. 
\] 

We can get the statement for $-2$-classes similarly. 
\end{proof}

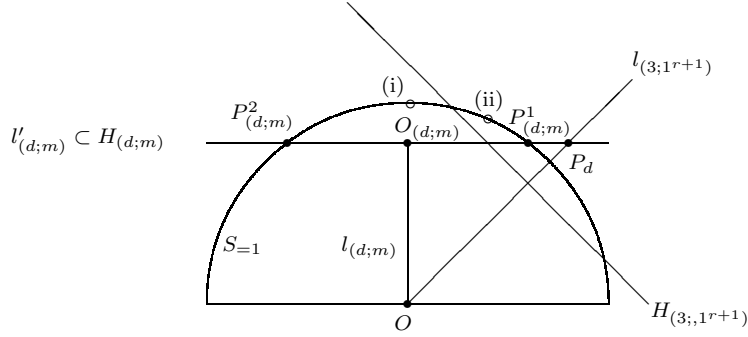
\begin{figure}[t]
\begin{picture}(200,150)(0,70)
   \put(44, 140.8){\line(1, 0){152}}
\put(120, 80){\line(0, 1){60.8}}
\put(44, 80){\line(1, 0){152}}
\put(211.2, 80){\line(-1, 1){114}}
\put(120, 80){\line(1, 1){85}}
    \qbezier(196,80)(196,111.48)(173.74,133.74)
  \qbezier(173.74,133.74)(151.48,156)(120,156)
  \qbezier(120,156)(88.52,156)(66.26,133.74)
  \qbezier(66.26,133.74)(44,111.48)(44,80)
  \put( 212, 75){\tiny$H_{(3;,1^{ r+1})}$}
\put( 205, 170){\tiny$l_{ (3;1^{r+1})}$}
\put( -30, 140.8){\tiny $l'_{(d;m)} \subset H_{(d;m)}$}
\put(121,155.5){\circle{3}}
\put(150.4,150){\circle{3}}
\put(120,140.8){\circle*{3}}
\put(120,80){\circle*{3}}
\put(165.6,140.8){\circle*{3}}
\put(180.8,140.8){\circle*{3}}
\put( 74.4, 140.8){\circle*{3}}
\put(115,70){\tiny $O$}
\put(115,145){\tiny $O_{(d;m)}$}
\put( 110, 160){\tiny (i)}
\put(145,155){\tiny (ii)}
\put( 180.8, 131){\tiny $P_{d}$}
\put( 158, 148){\tiny $P^1_{(d;m)}$}
\put( 53, 150){\tiny $P^2_{(d;m)}$}
\put( 50, 100){\tiny $S_{=1}$}
\put( 95, 100){\tiny $l_{(d;m)}$}
  \end{picture}
\caption{The case $r \ge 10$.  
White circles (i), (ii) are $P_{(a;b)}$ for the cases $\sum_{i=1}^{r+1} \frac{b_i}{a} < 3$, 
$\sum_{i=1}^{r+1} \frac{b_i}{a} > 3$, respectively. }\label{figcone10}
\end{figure}

\begin{rem}\label{excrem}
Lemma \ref{bound} does not give an explicit description of $M_L$  for the cases where $r \ge 9, \sum_{i=1}^{r+1} \frac{b_i}{a} >3$. 

If $r=9$ and  $\sum_{i=1}^{10} \frac{b_i}{a} >3$, then a $-1$-class $(d; m_1, \ldots, m_{10})$ satisfies that 
\[
d a - \sum_{i=1}^{10} m_i b_i >0   
\]
since we have $\Neg_{(d;m)} \subset N_{(3;1^{10})}$. Indeed this follows from Claim \ref{lying}. 
Hence we do not need to consider $-1$-classes for the computation of $\varepsilon_{\gen}(L)$ for such $L$. 

If $r \ge 10$ and $ \sum_{i=1}^{r+1} \frac{b_i}{a} >3$, the above argument in the proof of (ii) does not work since $P^1_{(d;m)}$ and 
$P^2_{(d;m)}$ are on different sides with respect to $H_{(3;1^{r+1})}$ as shown in Figure \ref{figcone10}. 
We can give an explicit description of $M_L$ by more detailed calculation. 
However we omit it for simplicity.  
\end{rem}

\subsection{Final reduction}\label{finalred}

By Section \ref{reduction1} (\ref{reduce1}) and Lemma \ref{bound}, we see that the computation of $\varepsilon(L,x)$ is reduced to consider 
an anticanonical curve on $X$ and a curve $\tilde{C} \subset \tilde{X}(x)$ which is either a $-1$-curve, a $-2$-curve of low degree or a fixed component 
of $|{-}K_{\tilde{X}(x)}|$. 

We see that a $-1$-class  is always effective by the following lemma. 

\begin{lem}
Let $(d;m_1, \ldots, m_s) \in \mathbb{Z}^{s+1}$ be a $-1$-class on a smooth rational surface $X$ with a birational morphism 
$\mu \colon X \rightarrow \mathbb{P}^2$ such that $K_X^2=9-s$. Let $E_1, \ldots, E_s$ be exceptional divisors as in Section \ref{state}. Set 
\[
\mathcal{O}_X(d;m_1, \ldots, m_s) := \mu^* \mathcal{O}_{\mathbb{P}^2}(d) - \sum_{i=1}^s m_i E_i. 
\] 
Then we see that 
\[
h^0(X, \mathcal{O}_X(d;m_1, \ldots, m_s)) >0. 
\] 
\end{lem}

\begin{proof}
It follows from 
\[
h^0(X, \mathcal{O}_X(d;m_1, \ldots, m_s)) \ge \frac{(d+1)(d+2)}{2} - \sum_{i=1}^s \frac{m_i(m_i+1)}{2} =1 
\]
since we have $d^2 - \sum_{i=1}^s m_i^2 = -1, 3d- \sum_{i=1}^s m_i =1$. 
\end{proof}

Hence we see that 
\begin{equation}\label{redeq}
\varepsilon(L,x) = \min \left\{ 
A(L), -K_X \cdot L, \inf \frac{L \cdot C}{\mult_x C}
\right\}, 
\end{equation}    
where  
\[
A(L) := \inf_{(d;m) \in \Phi_{r+1}} 
\frac{d a - \sum_{i=1}^r m_i b_i}{m_{r+1}} 
\]
is the contribution of $-1$-cycles on $\tilde{X}(x)$ and the infimum in the third term is taken over all curves $C$ in $X$ through $x$ 
whose strict transform $\tilde{C}$ in $\tilde{X}(x)$ 
is   
either a $-2$-curve of low degree or a fixed component of $|{-}K_{\tilde{X}(x)}|$. 
Note that $\varepsilon(L,x)$ does not change whether we consider $-1$-cycles or $-1$-curves since 
the constant $\varepsilon(L,x)$ does not change if we consider reducible curves $C$ on $X$ as explained in Remark \ref{reducible}.

In the following, we show that we do not need to consider the above third term in a general point $x$ in the equality (\ref{redeq}). 

\begin{prop}\label{2tsu}
Let $X,L$ be as in Setting \ref{setting}.  Then there exists a nonempty Zariski open subset 
$U_L \subset X$ such that, for $x \in U_L$,  
\[
\varepsilon_{\gen}(L) = \varepsilon(L,x) = \min \{ A(L), -K_X \cdot L \} .
\] 
\end{prop}

\begin{proof}
Put   
\[
U: = \{ x \in X \mid \exists D \in |{-}K_X| \text{ such that } \mult_x(D) =1 \} \setminus \Bs |{-}K_X|,  
\]
where $\Bs |{-}K_X|$ is the base locus. 
We see that $U$ is a Zariski open subset. 

Let $x \in U$ be a point such that 
\begin{equation}\label{excineq}
\varepsilon(L,x) < \min \{A(L), -K_X \cdot L \}.
\end{equation}
 Then there exists a curve $C$ through $x$ such that 
its strict transform $\tilde{C} \subset \tilde{X}(x)$ satisfies 
\begin{equation}\label{sesexc}
(\mu_x^* L - \sqrt{L^2} E_x) \cdot \tilde{C} \le 0.
\end{equation} 
Such $\tilde{C}$ with the inequality (\ref{sesexc}) is either a $-2$-curve or a fixed component of $|{-}K_{\tilde{X}(x)}|$ 
by the description (\ref{redeq}) of $\varepsilon(L,x)$. 
Note that we have a strict inequality in (\ref{sesexc}) when $\sum_{i=1}^{r+1} \frac{b_i}{a}=3$ as explained in Remark \ref{=3rem}. 

\vspace{5mm}

First consider when such a $\tilde{C}$ in (\ref{sesexc}) is a $-2$-curve. We see that $d \le M'_{L}$ by Lemma \ref{bound} (i) and (iii),   
where $\tilde{C} \in |\mathcal{O}_{\tilde{X}(x)}(d;m)|$ and $M'_L$ is the number given in Lemma \ref{bound}. 
Hence there are only finitely many possibilities for classes $(d;m_1,\ldots,m_{r+1})$ of $\tilde{C}$. 
Moreover, by the following claim, we see that such $\tilde{C}$ does not appear on $\tilde{X}(x)$ for a general point $x \in X$. 

\begin{claim}\label{-2vanish}
Let $(d; m_1, \ldots ,  m_{r+1})\in \mathbb{Z}^{r+2}$ be a $-2$-class such that $m_{r+1}\ge 1$. 
Put 
\[
Z_{(d;m)}:= \{x \in U \mid \exists  {-}2 \text{-curve } \tilde{C} \in |\mathcal{O}_{\tilde{X}(x)}(d;m_1,\ldots, m_{r+1})| \} . 
\]
Then $\overline{Z_{(d;m)}}$ is a proper Zariski closed subset of $X$. 
\end{claim}

\begin{proof}[Proof of Claim]
Let 
\[
-K_X = M +F 
\]
be the decomposition such that $M$ is the moving part of $|{-}K_X|$ and $F$ is the fixed part of $|{-}K_X|$. 
Then we have $h^0(X,M) = h^0(X, -K_X) \ge 2$ and general elements $\Theta_M \in |M|$ are reduced and smooth outside $\Bs |{-}K_X|$. 
For such $\Theta_M$, we consider $Z_{(d; m)} \cap \Theta_M^{\sm}$, where $\Theta_M^{\sm}$ is the smooth locus of $\Theta_M$. 
It is enough to show that $Z_{(d; m)} \cap \Theta_M^{\sm}$ 
is a finite set. 

Take $x \in Z_{(d;m)} \cap \Theta_M^{\sm}$. 
Set $\Theta:= \Theta_M + F \in |{-}K_X|$. Let $\tilde{\Theta} \subset \tilde{X}(x)$ be the strict transform of $\Theta$. 
Since $x \in \Theta_M^{\sm}$, we see that $\tilde{\Theta} \in |{-}K_{\tilde{X}(x)}|$. 
Let $\tilde{C}$ be a $-2$-curve of type $(d;m)$ which exists since $x \in Z_{(d;m)}$. 
 we can show that 
\begin{equation}\label{isomtheta}
\mathcal{O}_{\tilde{X}(x)}(\tilde{C})|_{\tilde{\Theta}} \simeq \mathcal{O}_{\tilde{\Theta}}   
\end{equation}
as follows. Note that $\tilde{C}$ is not contained in $\tilde{\Theta}$ as its irreducible component since $x \in \Theta_M$. 
Hence the restriction $\mathcal{O}_{\tilde{X}(x)}(\tilde{C})|_{\tilde{\Theta}_i} $ is effective, where $\tilde{\Theta}_i$ is the irreducible component of $\tilde{\Theta}$.   
By this and $-K_{\tilde{X}(x)} \cdot \tilde{C} =0$, each sheaf $\mathcal{O}_{\tilde{X}(x)}(\tilde{C})|_{\tilde{\Theta}_i}$ has degree $0$. Hence we get an isomorphism (\ref{isomtheta}).  

By the isomorphism (\ref{isomtheta}) and an isomorphism 
$\tilde{\Theta} \simeq \Theta$ as schemes,  
 we obtain   
\[
\mathcal{O}_X(d; m_1, \ldots, m_r)|_{\Theta} \simeq \mathcal{O}_{\Theta}(m_{r+1} \cdot x) 
\]
Thus, for $x' \in Z_{(d;m)} \cap \Theta_M^{\sm}$, we see that $\mathcal{O}_{\Theta}(x-x')$ is 
a torsion point of $\Pic^0 \Theta$ of order $m_{r+1}$. 
Note that $\dim \Pic^0 \Theta = h^1(\Theta, \mathcal{O}_{\Theta}) =1$ and there are only finitely many torsion points since the characteristic is zero. 
Let $\Theta^x_M$ be the irreducible component of $\Theta_M$ which contains $x$. 
We get the claim since the morphism 
\[
(\Theta^x_M)^{\sm} \rightarrow \Pic^0 \Theta ; p \mapsto \mathcal{O}_{\Theta}(x-p) 
\]
is non-constant. Indeed, for $p \in (\Theta_M^x)^{\sm}$, we see that $h^0(\Theta, \mathcal{O}_{\Theta}(p)) =1$ since we have an exact sequence 
\begin{equation}
0 \rightarrow H^0(\mathcal{O}_{\Theta}) \rightarrow H^0(\mathcal{O}_{\Theta}(p)) \rightarrow H^0( \mathcal{O}_{\Theta,p} / \mathfrak{m}_{\Theta,p}) 
\rightarrow H^1(\mathcal{O}_{\Theta}) \rightarrow H^1(\mathcal{O}_{\Theta}(p)) =0.  
\end{equation}
Hence we finish the proof of Claim \ref{-2vanish}. 
\end{proof}

Next we consider the case where $\tilde{C}$ with (\ref{sesexc}) satisfies  $\tilde{C} \subset \Fix |{-}K_{\tilde{X}(x)}|$. 
We see that $\tilde{C} \notin |{-}K_{\tilde{X}(x)}|$ by (\ref{excineq}), 
hence $C$ is rational. Moreover, $C$ is smooth since $x \in U$. 
If $\tilde{C}^2 = C^2 -1 \le -4$, then $C \subset \Fix |{-}K_X|$ since 
$-K_X \cdot C <0$. This contradicts that $ x \in U$. 
Hence $\tilde{C}$ is a $-3$-curve and $C$ is a $-2$-curve satisfying $L \cdot C \le \sqrt{L^2}$. There are only finitely many such curves and we set them as 
$C_1, \ldots, C_{d(L)}$ 
for some integer $d(L)$ determined by $L$.    

\vspace{5mm}
Set 
\[
U_L:= U \setminus \left( \bigcup_{d \le N_{L}} \overline{Z_{(d;m)}} \cup 
\bigcup_{i=1}^{d(L)} C_i \right). 
\]
Then  $U_L$ satisfies the required condition. Indeed there does not exist an irreducible curve $C \subset X$ through $x \in U_L$ whose 
strict transform on $\tilde{X}(x)$ 
is either a $-2$-curve or a $-3$-curve that  
satisfies the 
inequality (\ref{excineq}). 
\end{proof}

\begin{proof}[Proof of Theorem \ref{main}]
By Lemma \ref{bound}, Remark \ref{excrem} and   Proposition \ref{2tsu}, we get the result.
\end{proof}

\begin{rem}
We can compute $\varepsilon_{\gen}(L)$ for an ample $\mathbb{Q}$-divisor $L$ by computing $\varepsilon_{\gen}(mL)$ for a positive integer $m$ 
which is minimal among those such that $mL$ is integral.

Note that we used the assumption that $L$ is integral and primitive only in the case $\sum_{i=1}^{r+1} \frac{b_i}{a} =3$ in Lemma \ref{bound}. 
Hence,    
for an ample $\mathbb{R}$-divisor $L$ such that $-K_{\tilde{X}(x)} \cdot (\mu_x^*L - \sqrt{L^2} E_x) \neq 0$, we can compute 
$\varepsilon_{\gen}(L)$.  
\end{rem}

\subsection{The case of a blow-up of $\mathbb{F}_n$}\label{hirzsection}
We can also compute $\varepsilon_{\gen}(L)$ on $X$ such that $h^0(X,-K_X) \ge 2$ with a birational morphism $X \rightarrow \mathbb{F}_n$, where 
$\mathbb{F}_n:= \mathbb{P}(\mathcal{O}_{\mathbb{P}^1} \oplus \mathcal{O}_{\mathbb{P}^1}(-n))$ is the Hirzebruch surface. 
Let $\pi \colon  \mathbb{F}_n \rightarrow \mathbb{P}^1$ be the projection of the $\mathbb{P}^1$-bundle structure. 

\begin{set}\label{setting2}  
Let $X$ be a smooth rational surface such that $\dim |{-} K_X | \ge 1$ with a birational morphism $\nu \colon X \rightarrow \mathbb{F}_n$ which is a composition of blow-ups  
 \begin{equation}
 \nu \colon X=Y_{r+1} \stackrel{\nu_r}{\rightarrow} Y_r \rightarrow 
\cdots \stackrel{\nu_1}{\rightarrow} Y_1 
  = \mathbb{F}_n, 
  \end{equation} 
  where $\nu_i:Y_{i+1} \rightarrow Y_i$
is the blow-up at $y_i \in Y_i$. 
Let
 \begin{equation}\label{ampleform2}
 L:= \nu^{\ast} \mathcal{O}_{\mathbb{F}_n}(a H +b F) - \sum_{i=1}^r c_i G_i 
 \end{equation}
 be an ample Cartier divisor on $X$, where $a,b, c_1,\ldots, c_r$ are integers, $H:= \mathcal{O}_{\mathbb{P}(\mathcal{O}_{\mathbb{P}^1} \oplus \mathcal{O}_{\mathbb{P}^1}(-n))}(1)$ is the tautological bundle, $F:= \pi^* \mathcal{O}_{\mathbb{P}^1}(1)$ is the fiber class and   
 \[
G_i:= (\nu_{i+1} \circ \cdots \circ \nu_r)^* \nu_i^{-1}(y_i)
\]
is the exceptional divisor for $i=1,\ldots,r$ which is a $-1$-cycle. 
Let $\mu_x \colon \tilde{X}(x) \rightarrow X$ be the blow-up at a point $x \in X$ with the exceptional curve $E_x := \mu_x^{-1}(x)$. 
For integers $k_1,k_2, l_1, \ldots, l_{r+1}$,  we set 
\[
\mathcal{O}_{\tilde{X}(x)}(k_1,k_2;l_1, \ldots, l_{r+1}):= \mu_x^* \left(\nu^* (k_1 H + k_2 F) - \sum_{i=1}^{r} l_i G_i \right) -l_{r+1} E_x. 
\] 
\end{set}

The consideration in (\ref{reduce1}) about $\varepsilon(L,x)$ also works in this case. 
We see that $|{-}K_{\tilde{X}(x)}|$ does not have a fixed component for a general $x \in X$ by the proof of Lemma \ref{2tsu}. 
Hence, in order to compute $\varepsilon_{\gen}(L)$, it is enough to consider $-1$-curves and $-2$-curves on $\tilde{X}(x)$ and an element of $|{-}K_X|$.  

\vspace{5mm}

\noindent {\bf Case: $r \le n-2$.}
If $r \le n-2$, we have the following. 

\begin{prop}\label{n-2prop}
Let $X$ and $L$ be as in Setting \ref{setting2}. Assume that $r \le n-2$. 
Let $\mu_x \colon \tilde{X}(x) \rightarrow X$ be the blow-up at a point $x \in X$ with the exceptional curve $E_x := \mu_x^{-1}(x)$.  
\begin{enumerate}
\item[(i)] If $x$ is general, there is no $-2$-curve $\tilde{C}$ on $\tilde{X}(x)$ such that $\tilde{C} \cdot E_x \ge 1$.  
\item[(ii)] Let $\tilde{C}$ be a $-1$-curve on $\tilde{X}(x)$ such that $\tilde{C} \cdot E_x \ge 1$. 
Then we have 
\[
\tilde{C} \in \left| \mathcal{O}_{\tilde{X}(x)}(0,1;0^{\times r},1) \right|.
\]
\end{enumerate} 
\end{prop}

\begin{proof}
\noindent(i) Suppose that there exists such a $-2$-curve $\tilde{C}$ on $\tilde{X}(x)$. 
Set 
\[
\mathcal{O}_{\tilde{X}(x)}(\tilde{C}) := \mathcal{O}_{\tilde{X}(x)}(\alpha, \beta; \gamma_1, \ldots, \gamma_{r+1}). 
\] 
Since $x$ is general, we can easily see that $\alpha \neq 0$ and $\beta \neq 0$. 
Thus $\nu(\mu_x(\tilde{C})) \subset \mathbb{F}_n$ is an irreducible reduced curve. 
It is linearly equivalent to neither $H$ nor $F$. Hence we have 
\begin{equation}\label{HFineq}
-n\alpha+\beta = (\alpha H + \beta F)\cdot H \ge 0,
 \alpha =(\alpha H + \beta F)\cdot F \ge 0. 
\end{equation} 
Since $\tilde{C}$ is a $-2$-curve, we have 
\begin{equation}\label{acrel0}
0 = -K_{\tilde{X}(x)} \cdot \tilde{C} = 2 \beta - (n-2) \alpha - \sum_{i=1}^{r+1} \gamma_i. 
\end{equation}
Note that $-1$-cycles $\mu_x^*( \nu^* F - G_i)$ for $i=1, \ldots, r$ and $\mu_x^* \nu^* F - E_x$ are effective and 
they do not have $\tilde{C}$ as their irreducible component. Hence we have $\alpha \ge \gamma_i$ since we have 
\[
\alpha- \gamma_i = \mu_x^*( \nu^* F - G_i) \cdot \tilde{C} \ge 0 \ \ \ \ i=1,\cdots, r, 
\] 
\[
\alpha - \gamma_{r+1} = (\mu_x^* \nu^* F - E_x) \cdot \tilde{C} \ge 0.
\]  

Since we have $\beta \ge n\alpha$ and $\alpha >0$ by the above arguments, we see that  
\begin{equation}\label{finalineq1}
2 \beta - (n-2) \alpha - \sum_{i=1}^{r+1} \gamma_i \ge (n+2) \alpha - \sum_{i=1}^{r+1} \gamma_i \ge (n-r+1) \alpha \ge 3 \alpha >0.
\end{equation}
This contradicts (\ref{acrel0}) and we get the claim (i). 

\noindent(ii) Suppose that there exists a $-1$-curve $\tilde{C}$ such that $\tilde{C} \cdot E_x \ge 1$ and 
$\mathcal{O}_{\tilde{X}(x)}(\tilde{C}) \not \simeq \mathcal{O}_{\tilde{X}(x)}(0,1;0^{r},1)$. 
We again set $\mathcal{O}_{\tilde{X}(x)}(\tilde{C}) := \mathcal{O}_{\tilde{X}(x)}(\alpha, \beta; \gamma_1, \ldots, \gamma_{r+1})$. 

 As in (i), we get $\beta \ge n\alpha, \alpha \ge 0, \alpha \ge \gamma_i$ and  
\begin{equation}\label{acrel1}
1 = -K_{\tilde{X}(x)} \cdot \tilde{C} = 2 \beta - (n-2) \alpha - \sum_{i=1}^{r+1} \gamma_i. 
\end{equation}
As in (i), we have the same inequality (\ref{finalineq1}) and $3 \alpha >1$. It contradicts (\ref{acrel1}).  
Thus we get the claim (ii). 
\end{proof}

By the above argument and Proposition \ref{n-2prop}, we get the following. 
\begin{cor} Let $X$ and $L$ be those as in Setting \ref{setting2}. If $r \le n-2$, we have  
\[
\varepsilon_{\gen}(L) = \frac{\nu^* F \cdot L}{\mult_x \nu^*F} = a.
\] 
\end{cor}
 
\vspace{5mm} 
 
\noindent{\bf Case: $r \ge n-1$.}
We sketch how to compute $\varepsilon_{\gen}(L)$ when $r \ge n-1$.  
We can show that there are only finitely many $-2$-classes $\tilde{C}:=\mathcal{O}_{\tilde{X}(x)}(\alpha, \beta; \gamma_1, \ldots, \gamma_{r+1})$
 such that $(\mu_x^* L - \sqrt{L^2} E_x)\cdot \tilde{C} \le 0$ by showing the analogous statement as Lemma \ref{bound}. 
However we omit the details. 

Hence we have \[
\varepsilon_{\gen}(L)= \min \{ A(L), -K_X \cdot L \},  
\]
where $A(L):= \inf L \cdot C / \mult_x (C)$ is the infimum taken over all curves $C$ through $x$ 
whose strict transforms on $\tilde{X}(x)$ are $-1$-cycles. 
Since $-1$-cycles are effective and the contribution of $-1$-cycles are determined by the numerical structure of the lattice $\Pic \tilde{X}(x) \simeq \mathbb{Z}^{r+3}$, 
we can compute $A(L)$ by deforming the locations of centers $y_1, \ldots, y_r$. 

Thus, we can replace $X$ with a small deformation of $X$ with a birational morphism to $\mathbb{P}^2$  
 by deforming $y_1, \ldots, y_r$ so that the images of $y_1, \ldots, y_r$ on $\mathbb{F}_n$ are on distinct fibers of $\pi$ and 
not on the negative section of $\mathbb{F}_n$. 
Indeed, we get a birational morphism to $\mathbb{P}^2$ by performing elementary transformations on $y_1, \ldots, y_{n-1}$. 
Denote the obtained morphism as 
\[
\mu \colon X= X_{r+2} \stackrel{\mu_{r+1}}{\rightarrow} X_{r+1}\rightarrow \cdots \stackrel{\mu_1}{\rightarrow} X_1 = \mathbb{P}^2, 
\] 
where $\mu_i \colon X_{i+1} \rightarrow X_i$ is the blow-up at $x_i \in X_i$.  
By the property of elementary transformations, we can assume that the induced birational morphism 
$X_n \rightarrow X_2$ is a composition of blow-ups of distinct $n-1$ points on a $-1$-curve $\mu_1^{-1}(x_1) \subset X_2$. 
Let $E_i:=(\mu_{i+1} \circ \cdots \circ \mu_{r+1})^* \mu_i^{-1}(x_i)$ be a $-1$-cycle for $i=1, \ldots, r+1$ as in Setting \ref{setting}.  

We have the following equalities between two bases of $\Pic X$ which are coming from $\nu \colon X \rightarrow \mathbb{F}_n$ and 
$\mu \colon X \rightarrow \mathbb{P}^2$; 
\[
\nu^* H = E_1 - \sum_{i=2}^{n} E_i, \nu^* F = \mu^* \mathcal{O}_{\mathbb{P}^2}(1) - E_1, 
\]
\[
G_1= \mu^* \mathcal{O}_{\mathbb{P}^2}(1)-E_1 -E_2, \ldots, G_{n-1} = \mu^* \mathcal{O}_{\mathbb{P}^2}(1)- E_1 -E_n, 
\]
\[
G_n = E_{n+1}, \ldots, G_r= E_{r+1}. 
\]
By this description, we can rewrite $L$ as a linear combination of the basis $\mu^*\mathcal{O}_{\mathbb{P}^2}(1), E_1, \ldots, E_{r+1}$. 
Hence we can compute $A(L)=A_{M_L}(L)$ and $\varepsilon_{\gen}(L)$ by using Theorem \ref{main}.

\subsection{Examples}\label{exsection}
We compute $\varepsilon_{\gen}(L)$ for a specific $X$ and $L$ in the following examples. 

\begin{eg}\label{compex5}
We consider the case $r=5$ which we need in the proof of Theorem \ref{ldpclassif}. 
Let $L:= \mu^* \mathcal{O}_{\mathbb{P}^2}(5) - \sum_{i=1}^5 E_i$. Assume that $L$ is nef. 
In order to compute $\varepsilon_{\gen}(L)$,  we should consider the anticanonical class and $-1$-classes of the form $(1;1,0^4,1)$, $(2;1^4,0,1)$ or its permutations. 
Hence we see that the $-1$-class $(1;1,0^4,1)$ computes $\varepsilon_{\gen}(L)$ and  
\[
\varepsilon_{\gen}(L) = 5-1 =4.  
\] 
\end{eg}

\begin{eg}\label{compex}
Let $X = S_1$ be the smooth del Pezzo surface of degree $1$. Then $\mu: X \rightarrow \mathbb{P}^2$ 
is the blow-up at general $8$ points $x_1, \ldots, x_8$. 
Now put $L = \mu^{\ast}\mathcal{O}_{\mathbb{P}^2}(4) - \sum_{i=1}^8 E_i$. Then $L$ is an ample divisor on $X$. 
We compute $\varepsilon_{\gen}(L)$. First, we calculate $M_{L}$. 
In this case, we can compute 
\[
\lfloor M_{L} \rfloor 
= 8,   
\] 
where $\lfloor M_L \rfloor$ is the integer such that $\lfloor M_L \rfloor \le M_L < \lfloor M_L \rfloor +1$ 
and we can list the members of $\Phi_9(8) = \{(d;m) \in \Phi_9; d \le 8 \} $. 
By computation, we see that  the element $(4;1^8,3)$ computes $\varepsilon_{\gen}(L)$ as 
\[
\varepsilon_{\gen}(L) = \frac{4 \times 4 - 1 \times 8 }{3} = \frac {8}{3}.
\]
\end{eg}

\begin{rem}(\cite{Br})\label{Br}
If $\mu:X=X_{r+1} \rightarrow \mathbb{P}^2$ is the blow-up at general $r$ points ($r \le 8$), i.e.\ 
$X$ is a del Pezzo surface, then 
\[
\varepsilon_{\gen}(-K_X) = 
\begin{cases}
2 &(1 \le r \le 5)\\
\frac{3}{2} &(r=6)\\
\frac{4}{3} &(r=7)\\
1 &(r=8)
\end{cases}
\]
We put $a_r := \varepsilon_{\gen}(-K_{X_{r+1}}) (r=1,\ldots,8)$. 
\end{rem}

Then the following is a consequence of Theorem \ref{main} since we can compute $\varepsilon_{\gen}(L)$ 
from the numerical data. 

\begin{cor}\label{same}
Let $X$ be a smooth weak del Pezzo surface, that is, $-K_X$ is nef and big, and put $r:= 9- K_X^2$. Then 
\[
\varepsilon_{\gen}(-K_X) = a_r, 
\]
where $a_r$ is the number defined in Remark \ref{Br}.
\end{cor}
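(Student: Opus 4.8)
The plan is to specialize the preceding corollary to $L = -K_X$ and to observe that the resulting quantity is a function of $r$ alone.

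The first step is to reduce to the situation where $X$ is a blow-up of $\mathbb{P}^2$. Since $-K_X$ is nef we have $K_X^2 = (-K_X)^2 \ge 0$, hence $r = 9 - K_X^2 \le 9$ and $b_{r+1} = \sqrt{9-r}$ is real. A rational surface with $-K_X$ nef and $\dim|-K_X| \ge 1$ is a weak del Pezzo surface or a relatively minimal rational elliptic surface, and I would argue that every such $X$ is a blow-up of $\mathbb{P}^2$ with the single exception of the two minimal models $\mathbb{F}_0$ and $\mathbb{F}_2$ (both with $K_X^2 = 8$). Indeed, for $n \ge 3$ the class $-K_{\mathbb{F}_n}$ already has negative intersection with the negative section, and this cannot be repaired by blowing up, so no blow-up of $\mathbb{F}_n$ with $n \ge 3$ has $-K_X$ nef; whereas a blow-up of $\mathbb{F}_2$ keeping $-K_X$ nef must occur off the negative section, and an elementary transformation then exhibits it as a blow-up of $\mathbb{P}^2$. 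The two leftover models $\mathbb{F}_0$ and $\mathbb{F}_2$ fall under the case $r \le n-2$ of Section \ref{Fn} and are handled directly there, each giving $\epsilon_{\gen}(-K) = 2 = a_1$.

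With $\mu : X \to \mathbb{P}^2$ a blow-up at $r$ points, I write $-K_X = \mu^{\ast}\mathcal{O}_{\mathbb{P}^2}(3) - \sum_{i=1}^r E_i$, so that in the notation of Section \ref{state} we have $\va = (3; 1, \ldots, 1)$ and $b_{r+1} = \sqrt{9-r}$. The key point is that $\va$ is determined by $r$ alone, and it is primitive as soon as $r \ge 1$. One computes
\[
\sum_{i=1}^{r+1} \frac{b_i}{a} = \frac{r + \sqrt{9-r}}{3},
\]
which equals $3$ exactly when $r = 9$ (and also at the boundary value $r = 8$). Thus the first case of the preceding corollary, whose side condition is $r = 9$ and $\sum b_i/a \ge 3$, applies precisely when $r = 9$, giving $\epsilon_{\gen}(-K_X) = 3a - \sum_{i=1}^r b_i = 0$; for every $r \le 8$ we are in the remaining case, giving $\epsilon_{\gen}(-K_X) = A_{M_{\va}}(\va)$.

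It remains to conclude. Because $\Phi_{r+1}$ and the constant $M_{\va}$ of Remark \ref{M_a} depend only on $r$, and since
\[
A_{M_{\va}}(\va) = \min_{(\alpha;\beta) \in \Phi_{r+1},\, \alpha \le M_{\va}} \frac{3\alpha - \sum_{i=1}^r \beta_i}{\beta_{r+1}}
\]
results from substituting $a = 3$ and $b_i = 1$, the value $\epsilon_{\gen}(-K_X)$ depends on nothing but $r$; I take this common value (together with $\epsilon_{\gen}(-K_{\mathbb{P}^2}) = 3\,\epsilon_{\gen}(\mathcal{O}(1)) = 3$ in the non-primitive case $r = 0$) as the definition of $a_r$ in Remark \ref{Br}. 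The main obstacle is the first step: confirming that all the non-$\mathbb{P}^2$ models compatible with $-K_X$ nef, chiefly $\mathbb{F}_0$ and $\mathbb{F}_2$, return the same value $a_{9 - K_X^2}$, so that $a_r$ is genuinely well defined independently of the chosen birational model.
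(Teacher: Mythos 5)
Your overall route is the paper's: Corollary \ref{same} is stated there as an immediate consequence of the preceding corollary together with Broustet's values in Remark \ref{Br}, the key observation being that for $L=-K_X$ the data $\va=(3;1,\ldots,1)$, and hence the whole formula, depends only on $r$. Your reduction of the general nef case to blow-ups of $\mathbb{P}^2$ (disposing of $\mathbb{F}_n$, $n\ge 3$, and of $\mathbb{P}^1\times\mathbb{P}^1$ and $\mathbb{F}_2$ by hand) is a welcome extra that the paper leaves implicit. However, there is a genuine gap at $r=8$. You correctly note that $\sum_{i=1}^{r+1}b_i/a=(r+\sqrt{9-r})/3$ equals $3$ already at $r=8$, but you then place $r=8$ in the branch $\epsilon_{\gen}(-K_X)=A_{M_{\va}}(\va)$ because the displayed side condition of the preceding corollary reads ``$r=9$''. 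That branch gives the wrong number: for $\va=(3;1,\ldots,1)$ with $r=8$ one is in case (4) of Remark \ref{M_a}, so $\sqrt{M_{\va}^2+1}+M_{\va}=3$ forces $M_{\va}=4/3$, and the only elements of $\Phi_9$ with $\alpha\le 4/3$ and $\beta_9\ge 1$ are the permutations of $(1;1,0,\ldots,0,1)$, whence $A_{M_{\va}}(\va)=2\neq 1=a_8$. The correct treatment comes from Theorem \ref{main}(2), which covers $r=8,9$ with $\sum b_i/a\ge 3$ and whose generic value is $3a-\sum_{i=1}^r b_i=K_X^2=1$ (consistently, Remark \ref{need} records that $A(L)\ge -K_X\cdot L$ in exactly this range, so the $A$-term is never the minimum there). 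In short, the first branch of the preceding corollary must be read with ``$r=8,9$'', and your argument as written proves the false statement $\epsilon_{\gen}(-K_X)=2$ for a weak del Pezzo surface of degree $1$.

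The second, smaller, problem is the concluding step: you ``take the common value as the definition of $a_r$'', but $a_r$ is already defined in Remark \ref{Br} as Broustet's explicit value $2,\tfrac32,\tfrac43,1$ for the blow-up at $r$ \emph{general} points. What is needed is the one-line bridge: since the general-position blow-up is one of the surfaces covered by the statement, the $r$-dependent common value must coincide with its generic Seshadri constant, i.e.\ with $a_r$. Carrying out that identification (or simply evaluating $A_{M_{\va}}((3;1,\ldots,1))$ for each $r\le 7$ and the $-K_X\cdot L$ term for $r=8$) is precisely the check that would have exposed the discrepancy at $r=8$; without it the corollary is not actually proved, only reduced to an unverified numerical claim.
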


\section{Characterization of log del Pezzo surfaces with mild singularities}\label{ldpstate}

Seshadri constants satisfy the following lower semicontinuity property with respect to deformations. 

\begin{prop}\label{lsc}$($\cite{Laz}, Example $5.1.11 )$ 
Let $f:\mathcal{X} \rightarrow T$ be a flat projective morphism of algebraic varieties,  
$s: T \rightarrow \mathcal{X}$ be a section of $f$, that is $f \circ s = id_T$,  
and $\mathcal{L}$ be a $f$-ample invertible sheaf on $\mathcal{X}$. 
Assume $f$ is smooth along $s(T)$. Then, 
\[
\varepsilon_{\mathcal{L},s} : T_{\text{ct}} \rightarrow \mathbb{R}_{> 0}; 
t \mapsto \varepsilon(\mathcal{L}|_{f^{-1}(t)}, s(t))
\] 
is a lower semicontinuous function. Here $T_{\text{ct}}=T$ as a set and the topology of $T_{\text{ct}}$ is 
determined by the rule that 
the closed subsets of $T_{\text{ct}}$ 
are countable unions of Zariski closed subsets of $T$.   
\end{prop}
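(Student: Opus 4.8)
The plan is to prove lower semicontinuity by controlling, for each real number $c$, the sublevel set $\{t\in T:\epsilon_{\mathcal L,s}(t)<c\}$, and to show it is a countable union of Zariski closed subsets of $T$, i.e. closed in $T_{\mathrm{ct}}$. Once this is done, lower semicontinuity follows formally: l.s.c. is equivalent to $\{t:\epsilon_{\mathcal L,s}(t)\le c\}$ being closed for every $c$, and
\[
\{t:\epsilon_{\mathcal L,s}(t)\le c\}=\bigcap_{n\ge 1}\{t:\epsilon_{\mathcal L,s}(t)<c+\tfrac1n\},
\]
while arbitrary intersections of $T_{\mathrm{ct}}$-closed sets are again $T_{\mathrm{ct}}$-closed. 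So it suffices to treat the strict sublevel sets.

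First I would witness a small Seshadri constant by a curve. By definition $\epsilon(\mathcal L_t,s(t))$ is the infimum of $(\mathcal L_t\cdot C)/\mult_{s(t)}C$ over reduced irreducible curves $C\ni s(t)$. Since both $\mathcal L_t\cdot(-)$ and $\mult_{s(t)}(-)$ are additive on effective $1$-cycles, and a mediant $\sum n_i p_i/\sum n_i q_i$ of positive fractions is at least $\min_i p_i/q_i$, the same infimum is computed over all effective $1$-cycles through $s(t)$. Hence $\epsilon_{\mathcal L,s}(t)<c$ holds if and only if the fibre $X_t=f^{-1}(t)$ carries an effective $1$-cycle $Z$ with $s(t)\in\mathrm{Supp}\,Z$ and $\mathcal L_t\cdot Z<c\,\mult_{s(t)}Z$.

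Next I would spread this condition out over $T$. Let $\pi_j\colon W_j\to T$ ($j\in\mathbb N$) be the connected components of the relative Chow variety of $1$-cycles in the fibres of $f$; each $W_j$ is projective over $T$ and carries a universal cycle whose fibre over $w\in W_j$ I denote $Z_w\subset X_{\pi_j(w)}$. On a fixed $W_j$ the intersection number $d_j:=\mathcal L\cdot Z_w$ is constant, so if $k_j$ denotes the least integer strictly greater than $d_j/c$, then $\mathcal L\cdot Z_w<c\,\mult_{s(\pi_j(w))}Z_w$ is equivalent to $\mult_{s(\pi_j(w))}Z_w\ge k_j$. The key point is that $w\mapsto\mult_{s(\pi_j(w))}Z_w$ is upper semicontinuous on $W_j$, so that
\[
Z_j:=\{w\in W_j:\mult_{s(\pi_j(w))}Z_w\ge k_j\}
\]
is Zariski closed; properness of $\pi_j$ then makes $\pi_j(Z_j)\subset T$ Zariski closed. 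By the previous paragraph $\{t:\epsilon_{\mathcal L,s}(t)<c\}=\bigcup_{j}\pi_j(Z_j)$, a countable union of Zariski closed sets, as required.

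The hard part will be the upper semicontinuity of the fibrewise multiplicity along the section: this is exactly where the hypothesis that $f$ is smooth along $s(T)$ is used (so that each $s(t)$ is a smooth point of $X_t$ and the Hilbert--Samuel multiplicity is well behaved), and I expect to deduce it from semicontinuity of multiplicity in flat families applied to the universal cycle pulled back along $s$. Conceptually, the other essential feature is that the curves computing $\epsilon$ may have unbounded $\mathcal L$-degree, so one genuinely needs all of the infinitely many components $W_j$; this is precisely what forces the conclusion into the topology $T_{\mathrm{ct}}$ of countable unions rather than the ordinary Zariski topology, and explains why only this weaker form of semicontinuity can hold.
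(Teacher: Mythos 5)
The paper states Proposition \ref{lsc} without any proof, so there is no argument of the author's to measure yours against; what you propose is the standard route for such statements (reduce to showing each strict sublevel set $\{t:\epsilon_{\mathcal L,s}(t)<c\}$ is a countable union of Zariski closed sets, witness the inequality by a $1$-cycle, spread out over the countably many components of a relative parameter space, and push forward along a proper map). The mediant reduction from irreducible curves to effective $1$-cycles is correct, the bookkeeping with $d_j$ and $k_j$ on each component is right, and your closing remark about unbounded degrees explaining why only $T_{\text{ct}}$-semicontinuity can hold is exactly the correct intuition.

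There is, however, one genuine gap, and you have located it yourself: the upper semicontinuity of $w\mapsto \mult_{s(\pi_j(w))}Z_w$ on a Chow component is the entire content of the proposition, and the deduction you sketch (``semicontinuity of multiplicity in flat families applied to the universal cycle'') does not apply verbatim. The universal cycle over a component of the relative Chow variety need not be flat, and the relevant multiplicity is the cycle-theoretic one $\sum n_i\mult_{s(t)}C_i$, not the Hilbert--Samuel multiplicity of a flat family of subschemes. To close this you must either (i) work with the relative Hilbert scheme, whose universal family is flat, and then apply your mediant lemma to the $1$-cycle associated to each fibre of the universal subscheme so that a degenerate limit still witnesses $\epsilon<c$; or (ii) stratify $W_j$ so the universal cycle is flat on each stratum and take Zariski closures; or (iii) invoke specialization of cycles and conservation of multiplicity directly. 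Any of these works, and smoothness of $f$ along $s(T)$ is indeed what makes $\mult_{s(t)}$ well behaved, but as written the key lemma is asserted rather than proved. A minor further point: your opening reduction via $\{f\le c\}=\bigcap_n\{f<c+\tfrac1n\}$ relies on arbitrary intersections of $T_{\text{ct}}$-closed sets being closed, which is precisely the delicate axiom for $T_{\text{ct}}$ to be a topology; you can sidestep it, since once every $\{f\ge c\}$ is open the identity $\{f>c\}=\bigcup_n\{f\ge c+\tfrac1n\}$ already exhibits $\{f>c\}$ as a union of open sets, which is all lower semicontinuity requires.
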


\begin{defn}
Let $X$ be a normal projective surface. 
$X$ is called a {\it log del Pezzo surface} if its anticanonical divisor is ample and $X$ has only quotient singularities.  
\end{defn}

For explaining the motivation, let $X$ be a log del Pezzo surface with a $\mathbb{Q}$-Gorenstein smoothing, that is, 
there is a flat projective morphism $f:\mathcal{X} \rightarrow T \ni 0$ such that the relative canonical divisor 
$-K_{\mathcal{X}/T}$ is $\mathbb{Q}$-Cartier, $f$-ample, $\mathcal{X}_0 \simeq X$, 
and $\mathcal{X}_t$ are smooth for all $t \neq 0$. Hacking and Prokhorov classified such log del Pezzo surfaces of Picard number $1$ 
over $\mathbb{C}$ in \cite{HP}. They are important objects in the classification of $3$-fold del Pezzo fibrations.   
By Proposition \ref{lsc}, we have 
\[
\varepsilon_{\text{gen}}(-K_{\mathcal{X}_0}) \le \varepsilon_{\text{gen}}(-K_{\mathcal{X}_t}). 
\]     
The following question is the main subject of this section. 
\begin{prob}\label{equal}
Determine $X$ such that equality holds in the above inequality.
\end{prob}

\begin{rem}
It is known that a log del Pezzo surface $X$ over $\mathbb{C}$ has a $\mathbb{Q}$-Gorenstein smoothing if and only if 
 $X$ has only T-singularities, i.e.\ $X$ has only Du Val singularities or cyclic quotient singularities 
 of type $\frac{1}{dn^2}(1,dna-1)$ for some $d,n,a \in \mathbb{Z}_{>0}$ such that $(a,n) =1$. 
 For details, see \cite{HP}. 
\end{rem}

We answer Problem \ref{equal} when $4 \le K_X^2 \le 9$. In these cases, we do not need the assumption 
that $X$ has only T-singularities.

\begin{thm}\label{ldpclassif}
\begin{enumerate}

\item[(I)] Let $X$ be a log del Pezzo surface such that $K_X^2 =4,5,6,7,8$. Then, the following hold. 
\begin{enumerate}
\item[(a)] Suppose that $X$ has only canonical singularities or $ X $ is one of the special 7 types of the surfaces 
$Z_i (i=1,\ldots,7)$ which are defined in Remark \ref{speldp}.   
Then $\varepsilon_{\gen}(-K_X) = 2.$  
\item[(b)] Otherwise, we have $\varepsilon_{\gen}(-K_X) < 2.$
\end{enumerate}
\item[(II)] Let $X$ be a log del Pezzo surface such that $K_X^2 = 9$. Then, the following hold. 
\begin{enumerate}
\item[(a)] If $X \simeq \mathbb{P}^2$, then  $\varepsilon_{\gen}(-K_X) = 3.$
\item[(b)] Otherwise, we have $\varepsilon_{\gen}(-K_X) \le 2.$ 
\end{enumerate}
\end{enumerate}
\end{thm}

\begin{rem}\label{speldp}
We describe the special log del Pezzo surfaces $Z(k_1,\ldots,k_m)$,   
where $(k_1,\ldots,k_m)$ is one of 
\[
(1,1,1,1,1), (2,1,1,1), (3,1,1),(2,2,1),(4,1), (3,2), \text{or } (5).
\] 
In Figure \ref{ldpzu}, we give a picture of $Z(2,1,1,1)$. 

\begin{figure}[t]
\begin{picture}(380,240)(0,0)
\put(130, 180){\vector(1, 0){10}}
\put(250, 180){\vector(1, 0){10}}
\put(80, 130){\vector(0, -1){10}}

\put(280, 140){\line(0, 1){80}}
\put(360, 220){\line(0, -1){80}}
\put(280, 140){\line(1, 0){80}}
\put(360, 220){\line(-1, 0){80}}

\put(160, 140){\line(0, 1){80}}
\put(240, 220){\line(0, -1){80}}
\put(160, 140){\line(1, 0){80}}
\put(240, 220){\line(-1, 0){80}}

\put(40, 140){\line(0, 1){80}}
\put(120, 220){\line(0, -1){80}}
\put(40, 140){\line(1, 0){80}}
\put(120, 220){\line(-1, 0){80}}

\put(40, 20){\line(0, 1){80}}
\put(120, 100){\line(0, -1){80}}
\put(40, 20){\line(1, 0){80}}
\put(120, 100){\line(-1, 0){80}}

\put(320, 150){\line(0, 1){60}}
\put(320,160){\circle*{3}}
\put(320,170){\circle*{3}}
\put(320,180){\circle*{3}}
\put(320,200){\circle*{3}}

\put(305,160){$y_5$}
\put(305,170){$y_4$}
\put(305,180){$y_3$}
\put(305,200){$y_1$}

\put(203,203){$y_2$}

\put(245,185){bl-up}
\put(245,170){$y_1,y_3,$}
\put(245,160){$y_4,y_5$}

\put(125,185){bl-up}
\put(130,170){$y_2$}

\put(35,120){contract}
\put(85,125){$F_1:-4$-curve}
\put(85,115){$F_2:-2$-curve}

\put(200, 150){\line(0, 1){60}}
\put(200,200){\circle*{3}}
\put(180, 200){\line(1, 0){40}}
\put(180, 180){\line(1, 0){40}}
\put(180, 170){\line(1, 0){40}}
\put(180, 160){\line(1, 0){40}}

\put(80, 150){\line(0, 1){60}}
\put(60, 200){\line(1, 0){50}}
\put(60, 180){\line(1, 0){40}}
\put(60, 170){\line(1, 0){40}}
\put(60, 160){\line(1, 0){40}}
\put(90, 210){\line(1, -1){20}}

\put(60, 80){\line(1, -1){40}}
\put(60, 60){\line(1, 0){40}}
\put(60, 40){\line(1, 1){40}}
\put(80, 40){\line(0, 1){40}}

\put(80,60){\circle*{3}}
\put(90,70){\circle{3}}
\put(130,57){\circle*{3}}
\put(130,73){\circle{3}}
\put(135,70){$\cdots A_1$-sing}
\put(135,55){$\cdots \frac{1}{4}(1,1)$-sing}

\put(75,210){$-4$}
\put(45, 198){$-1$}
\put(45, 178){$-1$}
\put(45, 168){$-1$}
\put(45, 158){$-1$}
\put(105, 183){$-2$}

\put(60,225){$Y(2,1,1,1)$}
\put(320,225){$\mathbb{P}^2$}
\put(60,10){$Z(2,1,1,1)$}

\end{picture}
\caption{Construction of $Z(2,1,1,1)$}\label{ldpzu}
\end{figure}
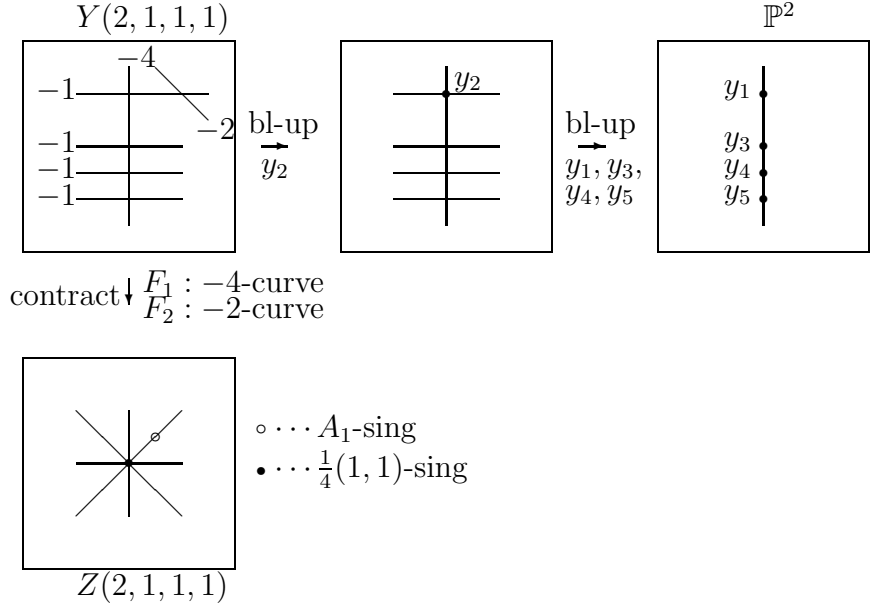

$Z(k_1,\ldots,k_m)$ is the log del Pezzo surface constructed as follows: 
let $z_1, \ldots ,z_m \in l \subset \mathbb{P}^2$ be distinct points on a line $l$.  
 Let 
 \[
 Y(k_1,\ldots,k_m)=Y_6 \stackrel{\phi_5}{\rightarrow} Y_5 \rightarrow 
 \cdots  \stackrel{\phi_1}{\rightarrow} Y_1 = \mathbb{P}^2
 \]  
 be the composition of blow-ups $\phi_i$ at some $y_i \in Y_i$ which we define as follows. 
 We can write $i = k_1+ \cdots +k_{m'}+k(i)$ for some 
 $0 \le m'\le m-1$ 
 and some $0 < k(i) \le k_{m'+1}$. Then $y_i$ is the strict transform of $z_{m'+1}$ if $k(i) = 1$, and 
 $y_i = \tilde{l} \cap \phi_{i-1}^{-1}(y_{i-1})$ otherwise, where $\tilde{l}$ is the strict transform of the line $l$. 
 By contracting a $-4$-curve and $-2$-curves on $Y(k_1,\ldots,k_m)$, we get a birational morphism $Y(k_1,\ldots,k_m) \rightarrow Z(k_1,\ldots,k_m)$. Thus we get a log del Pezzo surface $Z(k_1,\ldots,k_m)$ with only T-singularities.
We write $Z_1:= Z(1,1,1,1,1), Z_2 := Z(2,1,1,1), \ldots, Z_7 := Z(5)$. These are $Z_i (i=1,\ldots7)$ in Theorem \ref{ldpclassif}. 
\end{rem}

\section{Proof of Theorem \ref{ldpclassif}}\label{ldp}
In this section, we prove Theorem \ref{ldpclassif}.

\begin{proof} [Proof of Theorem \ref{ldpclassif} ]
Let $\nu: Y \rightarrow X$ be the minimal resolution of $X$. 
We can write
\[
K_Y = \nu^{\ast} K_X - \sum_{i=1}^k a_i F_i \ \ 
\] 
for some rational number $a_i$ such that $0 \le a_i <1$, 
where $\nu^{-1}(\Sing X) = \bigcup_{i=1}^k F_i$ is the irreducible decomposition. Then $F_i \simeq \mathbb{P}^1, 
F_i^2 \le -2$ since $K_Y$ is $\nu$-nef and $X$ has only quotient singularities. 

 We have $\varepsilon_{\gen}(-K_X) = \varepsilon_{\gen}(\nu^{*}(-K_X)) = \varepsilon_{\gen}(-K_Y - \sum_{i=1}^k a_i F_i) 
= \varepsilon(-K_Y - \sum_{i=1}^k a_i F_i,y) $, where $y \in Y$ is in very general position.

\vspace{5mm}

\noindent(I) First, we consider the case $K_X^2 = 4,5,6,7,8$. 
Suppose $X$ has only canonical singularities. Since $a_i=0$ for all $i$ and $Y$ is weak del Pezzo, we have  
$\varepsilon_{\gen}(-K_X)= \varepsilon_{\gen}(-K_Y)=2$ by Corollary \ref{same}. 

Suppose $X$ has a non-canonical singularity. Then there exists some $F_{J}$ such that $F_{J}^2 \le -3$.  
Note that  $a_j > 0$ if $F_j^2 \le -3$. 

\vspace{5mm}

First, suppose that $Y$ has a birational morphism $\chi:Y \rightarrow \mathbb{F}_n$ 
for some $n \ge 3$. 
We can assume that $F_1$ is the strict transform of the negative section $X_0 \subset \mathbb{F}_n$ 
and so $a_1 >0$. 
Then $\varepsilon_{\gen}(-K_Y -\sum_{i=1}^k a_i F_i) \le (-K_Y-\sum_{i=1}^k a_i F_i) \cdot \mu^{\ast} f \le 2 -  a_1 \mu^{\ast}X_0 \cdot \mu^{\ast} f = 2- a_1 < 2$, where $f \in \Pic \mathbb{F}_n$ is the class of a fiber of $\mathbb{F}_n \rightarrow \mathbb{P}^1$. So we are done. 

\vspace{5mm}
 
In the rest of the cases, we see that  $Y$ has a birational morphism to $\mathbb{P}^2$ as follows 
\[
\chi: Y=Y_{r+1} \stackrel{\rho_r}{\rightarrow} Y_r \stackrel{\rho_{r-1}}{\rightarrow} \cdots Y_2 \stackrel{\rho_1}{\rightarrow} Y_1 = \mathbb{P}^2,  
\] 
where $\rho_i$ is the blow up at $y_i \in Y_i$.  Set $\chi_i := \rho_i \circ \cdots \circ \rho_r \colon Y_{r+1} \rightarrow Y_i$ and 
 $E_i = (\rho_{i+1} \circ \cdots \circ \rho_r)^* \rho_i^{-1}(y_i)$ for $i=1,\ldots,r$. Note that $E_i$ is a $-1$-class.   
We fix the following isomorphism 
\[
\Phi \colon \Pic Y \simeq \mathbb{Z}^{r+1} ; \chi^* \mathcal{O}_{\mathbb{P}^2}(1) \mapsto (1;0^{r}), E_i \mapsto (0;0,\ldots,\stackrel{i}{-1},0, \ldots,0). 
\]
Define $\alpha^{(j)}, \beta^{(j)}_i \in \mathbb{Z}$ by 
$F_j = \chi^{\ast}\mathcal{O}_{\mathbb{P}^2}(\alpha^{(j)}) - \sum_{i=1}^r \beta^{(j)}_i E_i$ for $j=1,\ldots,k$. Then $F_j$ 
corresponds to $(\alpha^{(j)}; \beta_1^{(j)}, \ldots, \beta_r^{(j)})$.

We treat the following 3 cases (i),(ii),(iii) corresponding to the degree $\alpha^{(J)}$ of the negative curve $F_{J}$.   

\vspace{5mm}

\noindent (i) Assume that there exists $F_{J}$ such that $F_{J}^2 \le -3$ and $\alpha^{(J)} \ge 2$.

\begin{claim} 
We have $\alpha^{(J)} > \beta^{(J)}_i$ for all $i$.  
\end{claim}

\begin{proof}[Proof of Claim]
We see that   
\[
0 < (\chi^{\ast} \mathcal{O}_{\mathbb{P}^2}(1) -E_i) \cdot F_{J} = \alpha^{(J)} - \beta^{(J)}_i.   
\]  
Indeed $|\chi^{\ast} \mathcal{O}_{\mathbb{P}^2}(1) -E_i|$ contains an effective divisor which is a sum of the strict transform $\tilde{l}$ of a line on $\mathbb{P}^2$ and 
several $\chi$-exceptional curves. 
We can assume that the line intersects $\chi(F_{J})$ at some point which is not the blow-up centre of $\chi$. Hence we have $\tilde{l} \cdot F_{J} >0$. 
Since $F_{J}$ is irreducible and not $\chi$-exceptional, we get the claim.   
\end{proof}

Let ${l}_1(y) \subset \mathbb{P}^2$ be the line through $\chi(y),y_1 \in \mathbb{P}^2$ and 
$\tilde{l}_1(y) \subset Y$ be its strict transform.  
Note that, if $i \neq 1$, then  
$\tilde{l}_1(y) \cap E_i = \emptyset$ 
since $\chi( y) $ is a very general point on $\mathbb{P}^2$. So 
$\tilde{l}_1(y) \in |\chi^{\ast}\mathcal{O}_{\mathbb{P}^2}(1)- E_1|$. 
Also note that $\tilde{l}_1(y)$ is irreducible and 
$\tilde{l}_1(y)^2 =0$, hence it is nef. 
Then, since $F_{J} \cdot \tilde{l}_1(y) = \alpha^{(J)}- \beta^{(J)}_1 > 0$ and $a_{J} > 0$,  
we get 
\begin{equation}\label{sameineq}
(-K_Y - \sum_{i=1}^k a_i F_i) \cdot \tilde{l}_1(y) = 2 - \sum_{i=1}^k a_i F_i \cdot \tilde{l}_1(y) < 2,  
\end{equation}
and so $ \varepsilon_{\gen}(-K_X) <2$. 

\vspace{5mm}

\noindent(ii) 
Next, assume that there exists $F_{J}$ such that $F_{J}^2 \le -3$ and $\alpha^{(J)} = 0$. 
Put $\chi'_i:= \rho_1 \circ \cdots \circ \rho_{i-1} : Y_i \rightarrow  \mathbb{P}^2$. 
Let $E'_j \subset Y$ be the strict transform of a $-1$-curve $\rho_j^{-1}(y_j) \subset Y_{j+1}$ for $j=1,\ldots,r$.
We can assume that $y_1= \chi(F_{J})$ and $y_2, \ldots, y_s$ for some positive integer $s$ are all the points over $y_1$, that is, $y_1 = \chi'_2(y_2) = \cdots = \chi'_s(y_s)$. Note that  
\[
\chi^{-1}(y_1) = \bigcup_{i=1}^{s} E'_i. 
\]   
Moreover there exist some integers $J(1), \ldots, J(l)$ such that $J(1) =1$, $E'_{J(l)} = F_J$ and, for distinct integers $i, k$, we have  
\begin{equation}\label{relE'}
E'_{J(i)} \cdot E'_{J(k)} = 
\begin{cases} 
 1 & i-k = 1  \\ 
 0 & i-k >1 .  
\end{cases}
\end{equation} 
If ${E'_{1}}^2 \le -3$, we can assume that $l=1$. 
If ${E'_{1}}^2 \ge -2$, we see that ${E'_{1}}^2 = -2$ and can assume that $(E'_{J(i)})^2 \ge -2$ for $i <l$. 

\begin{claim}\label{-2claim}
If ${E'_1}^2 = -2$ and we use the above assumptions, we see that $(E'_{J(i)})^2 = -2$ for $i=1,\ldots,l-1$. 
\end{claim} 

\begin{proof}[Proof of Claim]
If not, there exists some $l'$ such that $1<l'<l$ and $(E'_{J(l')})^2 = -1$ and 
$(E'_{J(i)})^2 =-2$ for  $i =1,\ldots l'-1$. 
We can write the class of $E'_1=E'_{J(1)}$ as 
\[
E'_{J(1)} = E_{J(1)} -E_{J(2)} 
\] 
which corresponds to $(0;-1,1,0^{r-2}) \in \mathbb{Z}^{r+1}$ since $y_2 \in \rho_1^{-1}(y_1)$. 
By the relations (\ref{relE'}), we have 
$E'_{J(2)} = E_{J(2)} - E_{J(3)}, \ldots, E'_{J(l'-1)} = E_{J(l'-1)}- E_{J(l')}$ in order. By the same relation, 
we also have  
$E'_{J(l'+1)} \le E_{J(l'+1)} - \sum_{i=1}^{l'} E_{J(i)} $
and $(E'_{J(l'+1)})^2 \le -3$. Hence we can write $\Phi(E'_{J(l'+1)})$ as $(0;1^{l'},-1,*,\ldots,*) \in \mathbb{Z}^{r+1}$. 
However this can not be effective since $E_1=E_{J(1)}$ is the exceptional divisor of the blow-up of a point on $\mathbb{P}^2$. 
    
\end{proof}

Thus we can assume that $E'_{J(i)}= F_i$ for $i=1,\ldots, l$ by changing the orders of $F_j$. 

\begin{claim}
Under the above assumptions, we have 
$a_{1} >0$. 
\end{claim} 

\begin{proof}[Proof of Claim] 
If $l=1$, we are done. 
If $l>1$, by Claim \ref{-2claim}, we have 
\[
0 = K_Y \cdot E'_{J(l-1)} = -\sum_{j=1}^k a_j F_j \cdot E'_{J(l-1)} \le -a_{l-1} F_{l-1}^2 - a_{J} F_{J} \cdot F_{l-1} <  -a_{l-1} F_{l-1}^2 = 2 a_{l-1} 
\]
and get $a_{l-1}>0$. We can continue this process to get $a_{1} >0$. 
\end{proof}

Let $l_{1}(y)\subset \mathbb{P}^2$ be the line through $\chi(y)$ and $y_1$ and 
$\tilde{l}_{1}(y) \subset Y$ be its strict transform. 
Then, by an argument which is similar to that in (i), we get
the inequality (\ref{sameineq}) again,  
and so $\varepsilon_{\gen}(-K_X) < 2$. 

\vspace{5mm}

\noindent(iii) 
In the rest of the cases, all $F_j$ such  that $F_j^2 \le -3$ satisfies that $\alpha^{(j)}=1$. 
We can assume $F_1^2 \le -3$ without loss of generality.

Suppose that $F_1$ satisfies $\beta^{(1)}_i = 1$ for all $i$. Then $F_j$ for $j \neq 1$ satisfies that $\alpha^{(j)}=0$. Indeed, if there is  
$F_{j'}$ such that $\alpha^{(j')}\ge 1$, then we have $-K_Y \cdot F_{j'} = 3 \alpha^{(j')} - \sum_{i=1}^r \beta^{(j')}_i \le 0$ and 
\[
F_1 \cdot F_{j'} = \alpha^{(j')} - \sum_{i=1}^r \beta^{(j')}_i <0 
\] 
since $\alpha^{(j')}>0$ and this is a contradiction. 
Thus we get $\alpha^{(j)}=0$ for $j \neq 1$ and  $F_ j^2 =-2$ for all $j \neq 1$ by the assumption of (iii). 
We also see that $F_1 \cdot F_j =0$ for $j \neq 1$ by the above argument. 
Hence we get $K_Y = \nu^* K_X -a_1 F_1$. We see that $-2-F_1^2 = K_Y \cdot F_1 = -a_1 F_1^2$ and 
\[
a_1 = 1 + \frac{2}{F_1^2}
\] 
and 
\[
\mathbb{Z} \ni K_X^2 = K_Y^2 + a_1^2 F_1^2 = K_Y^2 + F_1^2 +4 + \frac{4}{F_1^2}.  
\]
Hence $F_1^2 = -4$ and $r=5$. Then $Y$ can be constructed by blowing up a point 5 times on a line on 
$\mathbb{P}^2$. 
 Therefore, we can see that $Y$ is one of $Y(k_1,\ldots,k_m)$ and $X$ is one of $Z(k_1,\ldots,k_m)$. 
In this case, we can compute $\varepsilon_{\gen}(-K_X) =2$. Indeed, we can see that 
\[
\Phi(\nu^* ({-}K_X)) = \Phi(-K_Y - \frac{1}{2}F_1) = (3;1^5) - \frac{1}{2}(1;1^5) = \left(\frac{5}{2}; \left( \frac{1}{2} \right)^5 \right), 
\]
\[
\varepsilon_{\gen}(X,-K_X) = \varepsilon_{\gen}\left( Y,\frac{1}{2} 
\left( \chi^{\ast} \mathcal{O}_{\mathbb{P}^2}(5) - \sum_{i=1}^5 E_i \right) \right) = 2  
\]
by calculating as in Example \ref{compex5}. 

In the rest case, there exists some $i$ such that $\beta^{(1)}_i =0$. We can assume that $y_1 = \chi'_i(y_i)$. 
\begin{claim}
 If $\chi'_i(y_i) \in \chi(F_1)$,  
then $E'_1$ is a $(-2)$-curve and $F_1 \cap E'_1 \neq \emptyset$. 
\end{claim} 
\begin{proof}[Proof of Claim]
By the assumption of (iii), an irreducible $\chi$-exceptional curve is a $-1$-curve or a $-2$-curve. If $E'_1$ is a $-1$-curve, we see that $i=1$ and thus $\beta_i^{(1)}= \beta_1^{(1)}=1$.
 It contradicts the assumption. 
Hence we see that $E'_1$ is a $-2$-curve.  If $E'_1 \cap F_1 = \emptyset$, the $\chi$-exceptional curves over $\chi'_i(y_i)$ 
form a tree and the curve which intersects with $F_1$ is a $-1$-curve. 
This only happens if $\chi$-exceptional curves over $\chi'_i(y_i)$ are generated by blowing up intersection points of the strict transforms of $\chi(F_1)$ and the exceptional curves and 
we see that $\beta^{(1)}_i =1$. 
This contradicts that $\beta^{(1)}_i =0$. Thus we see that $E'_1 \cap F_1 \neq \emptyset$. 
 \end{proof}

 Hence we can write $F_2= E'_1$ and see that $a_2>0$ if $\chi'_i(y_i) \in \chi(F_1)$. 
In both cases, we take a line $l_i(y)$ on $\mathbb{P}^2$ which passes through
$\chi(y)$ and $\chi'_i (y_i)$. Let $\tilde{l}_i(y)$ be the strict transform of $l_i(y)$. Then  
the inequality (\ref{sameineq}) holds again.  
 Indeed, if $\chi'_i(y_i) \in \chi (F_1)$, we have $a_1>0, \tilde{l}_i(y) \cdot F_{2} >0 $ and this implies the inequality (\ref{sameineq}). 
If $\chi'_i(y_i) \notin \chi(F_1)$, we have $a_1 >0, \tilde{l}_i(y) \cdot F_1 >0$ and this implies the inequality (\ref{sameineq}).   
And so $\varepsilon_{\gen}(-K_X) < 2$. 

\vspace{5mm}

\noindent (II) Next, consider the case $K_X^2=9$ and $X$ is not smooth. We can see that $X$ has a non-canonical singularity 
since a weak del Pezzo surface of degree $9$ is $\mathbb{P}^2$. 
Then  we see that $Y$ has a birational morphism to $\mathbb{P}^2$ or to $\mathbb{F}_n$ for some $n \ge 3$. 

If $Y$ has a birational morphism to $\mathbb{P}^2$, the morphism is non-trivial and we take a line $l_1(y)$ through $\chi'_1(y_1)$ and $\chi(y)$. Let $\tilde{l}_1(y) \subset Y$ be its strict transform. 
Then, as in (I), we have 
\[
\varepsilon_{\gen}(-K_X) = \varepsilon_{\gen}(-K_Y - \sum_{j=1}^k a_j F_j) \le (-K_Y- \sum_{j=1}^k a_j F_j) \cdot \tilde{l}_1(y) \le 2. 
\]

If $Y$ has a birational morphism to $\mathbb{F}_n$ for some $n \ge 3$, then we see that $\varepsilon_{\gen}(-K_X) \le 2$ by considering the strict transform of the fiber class as in Case 2 of (I). 
\end{proof}

\section*{Acknowledgment}
This note is the expanded version of the master's thesis of the author at the University of Tokyo. 
The author is grateful to his advisor Professor Yujiro Kawamata for his warm encouragement and valuable comments. 
He would like to thank Professors Brian Harbourne, Sandra Di Rocco for valuable comments and suggestions. 
He would like to thank Professor Miles Reid and Doctors Michael Selig and Thomas Ducat 
for checking the manuscript carefully and improving the presentations. 
He is grateful to the anonymous referee for many constructive comments about the presentations of the paper. 
He is also grateful to Doctor Atsushi Ito for checking the manuscript carefully and many useful comments. 
The author was partially supported by the Global COE program of the University of Tokyo and 
Warwick Postgraduate Research Scholarship.

\end{document}